\numberwithin{equation}{section}
\newtheorem{thm}{Theorem}[section]
\newtheorem{prop}[thm]{Proposition}
\newtheorem{lem}[thm]{Lemma}
\newtheorem{conj}[thm]{Conjecture}
\newtheorem{dfn}[thm]{Definition}
\newtheorem{remark}[thm]{Remark}
\newtheorem{cor}[thm]{Corollary}
\numberwithin{equation}{section}
\newcommand{\N}{\mathbb{N}}
\newcommand{\Q}{\mathbb{Q}}
\newcommand{\R}{\mathbb{R}}
\newcommand{\Z}{\mathbb{Z}}
\newcommand{\mcO}{\mathcal{O}}
\newcommand{\mcL}{\mathcal{L}}
\newcommand{\mfa}{\mathfrak{a}}
\newcommand{\mfn}{\mathfrak{n}}
\newcommand{\Cl}{\mathrm{Cl}}
\newcommand{\Gal}{\mathrm{Gal}}
\newcommand{\tors}{\mathrm{tors}}
\newcommand{\ra}{\rightarrow}
\def\1{1\!\!1}
\newcommand{\mrm}[1]{\mathrm{#1}}
\title[Lower bounds-abelian varieties-Lehmer conjecture]{
Lehmer-type bounds and counting rational points of bounded heights on Abelian varieties}
\author[N. Kumar]{Narasimha Kumar}
\address[N. Kumar]{Department of Mathematics, Indian Institute of Technology Hyderabad, Kandi, Sangareddy 502285, INDIA.}
\email{narasimha@math.iith.ac.in}
\author[S. Sahoo]{Satyabrat Sahoo}
\address[S. Sahoo]{Department of Mathematics, Indian Institute of Technology Hyderabad, Kandi, Sangareddy 502285, INDIA.}
\email{ma18resch11004@iith.ac.in}
\keywords{Abelian varieties, N\'eron-Tate height, Lehmer-type bounds, Counting rational points}
\subjclass[2010]{Primary 11G50, 11G10, 14K15; Secondary 14G25}
\date{\today}
\begin{document}
	\begin{abstract}
	In this article, we study Lehmer-type bounds for the N\'eron-Tate height of $\bar{K}$-points on abelian varieties $A$ over number fields $K$. Then, we estimate
	the number of $K$-rational points on $A$ with N\'eron-Tate height $\leq \log B$ for $B\gg 0$. This estimate involves a constant $C$, which is not explicit. However, for elliptic curves and the product of elliptic curves over $K$, we make the constant explicitly computable.
	\end{abstract}		
	\maketitle
\section{Introduction}
Throughout this article, $K$ denotes a number field, and $\bar{K}$ denotes its algebraic closure. Let $\mathbb{P}$ denote the set of all rational primes. Let $E/K$ (resp., $A/K$) denote an elliptic curve (resp., abelian variety) over $K$. There has been extensive literature on Lehmer-type bounds for the N\'eron-Tate height on $A(\bar{K})$, and counting the number of rational points of bounded heights on $E/K$ (cf.
\cite{M84}, \cite{M89}, \cite{GM17}, \cite{BZ04},\cite{N21}).
%Let $\mathbb{P}$ denote the set of all rational primes.

\subsection{Lehmer-type bounds}
In the literature, the following conjecture is known as the Lehmer conjecture.
Let $\mcL$ be a symmetric ample line bundle on $A$, and let $\hat{h}$ be the N\'eron-Tate height on $A(\bar{K})$ associated to $L$.
\begin{conj}[\cite{M84}, \cite{DH00}]
	\label{Lehmer conjecture}
For any abelian variety $A/K$ of dimension $g$, there exists a constant $C=C(A,K)\in \R^+$ (depending on $A,K$) such that
%$\hat{h}(P) \geq CD^{-\frac{1}{g}}$, for any point $ P \in A(\bar{K}) \setminus A(\bar{K})_{\tors}$ of degree $D=[K(P): K]$.
	$\hat{h}(P) \geq \frac{C}{D^{\frac{1}{g}}}$ for any non-torsion point $ P \in A(\bar{K})$ of degree $D=[K(P): K]$.
\end{conj}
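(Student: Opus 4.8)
The plan is to prove the conjecture by the auxiliary–polynomial (transcendence) method on the group variety $A$, in the tradition of Masser, Philippon and David, pushing every parameter to the critical exponent $1/g$. Assume for contradiction that some non-torsion $P \in A(\bar{K})$ of degree $D = [K(P):K]$ satisfies $\hat{h}(P) < C\,D^{-1/g}$ for a constant $C$ to be fixed at the end. First I would reduce to the \emph{non-degenerate} case: let $B \subseteq A$ be the smallest abelian subvariety whose translates contain infinitely many multiples $[n]P$; by Masser--W\"ustholz-type bounds on abelian subvarieties one controls $\deg_{\mcL} B$ in terms of $\deg_{\mcL} A$ and $g$, and since $\hat{h}$ restricted to $B$ is the N\'eron--Tate height for $\mcL|_B$, after replacing $A$ by $B$ and running an induction on $g$ we may assume the $\Z$-orbit of $P$ is Zariski dense in $A$. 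Passing to the $\mathrm{Gal}(\bar{K}/K)$-orbit $P = P_1,\dots,P_D$, all conjugates share the same small height.

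Next I would build the auxiliary section. Choose integers $T$ (vanishing order), $S$ (range of multiples) and $L_0$ (degree in $\mcL$) as explicit monomials in $D$, $\deg_{\mcL}A$, the stable Faltings height of $A$, and the archimedean period data. Using Siegel's lemma --- or, for the sharpest height control, Bost's slope inequalities in Arakelov geometry --- I would construct a nonzero $F \in \mathrm{H}^0(A,\mcL^{\otimes L_0})$ defined over $K$ vanishing to order $\geq T$ along the $0$-cycle $\sum_{1 \le n \le S}[n]P$, viewed through the formal/jet structure of $A$. The identity $\hat{h}([n]P) = n^2\hat{h}(P)$ keeps the arithmetic height of the coefficients of $F$ under control precisely because $\hat{h}(P)$ is assumed tiny.

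The heart of the argument is the \emph{extrapolation}: using a Schwarz lemma for theta functions on the universal cover $\C^g \to A(\C)$ together with quasi-periodicity, and the fact that $[n]P$ for $n$ somewhat beyond $S$ still has small archimedean size, I would show $F$ vanishes to high order at $[n]P$ for $n$ up to some $S' \gg S$; iterating this a bounded number of times and converting ``analytic smallness'' into ``arithmetic vanishing'' via the product formula over $K(P)$ amplifies the vanishing set. A final application of Philippon's zero estimate for commutative group varieties shows that, because the orbit of $P$ is Zariski dense, $F$ cannot vanish on a translate of a positive-dimensional abelian subvariety through all these points, so the number of genuine conditions imposed is $\le c\,L_0^{g}$; comparing with the $\gtrsim S' T^{g}$ conditions produced above yields a numerical inequality that, under the optimal choice of parameters, is violated unless $\hat{h}(P) \ge C\,D^{-1/g}$. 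Tracking the dependence on $\deg_{\mcL}A$, the Faltings height and $[K:\Q]$ through the slope estimates then renders $C = C(A,K)$ explicit.

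The hard part --- and the reason the conjecture is still open --- is the extrapolation step: every known version loses a power of $\log D$ (or a $(\log D/\log\log D)^{-\kappa}$ factor), because in the product-formula inequality only the $D$ conjugates of $P$ can be exploited and the analytic Schwarz estimate degrades correspondingly, so one lands at an exponent strictly larger than $1/g$ rather than at $1/g$ itself. Removing this logarithmic loss would require either a genuinely sharper interpolation/zero-estimate package on $A(\C)$ or a new arithmetic input --- for instance effective control of the $\ell$-adic Galois image, or of the reduction types of $A$ --- strong enough to replace the missing $\log$; that is the step I expect to be the main obstacle.
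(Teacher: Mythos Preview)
The statement you are attempting to prove is Conjecture~\ref{Lehmer conjecture} in the paper, and the paper does \emph{not} prove it: it is stated there as an open conjecture, attributed to \cite{M84} and \cite{DH00}. The paper's own contributions (Theorem~\ref{abelian mainthm3} and Corollary~\ref{abelian mainthm2}) establish only the much weaker exponents $(D\log D)^{2g}$ and $D^{2g+\epsilon}$, under the additional hypothesis that $K(P)/K$ is Galois, via an elementary counting argument based on Masser's small-points bound (Theorem~\ref{Mas bound for points of small height}). There is therefore no ``paper's own proof'' of this statement to compare your proposal against.

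Your proposal correctly identifies the standard transcendence framework (auxiliary sections, extrapolation, Philippon's zero estimate) that underlies the best known partial results, and you are honest in the final paragraph that the extrapolation step loses a logarithmic factor and that this is precisely why the conjecture remains open. But that admission means your proposal is not a proof: you have outlined the known strategy and then named, rather than overcome, the obstruction. The ``plan'' as written does not yield the exponent $1/g$; it yields at best exponents of the form $D^{-1/g}(\log D)^{-\kappa}$ or worse, which is exactly the current state of the art (cf.\ \cite{DH00} for the CM case). If you intend this as a proof, the genuine gap is the entire extrapolation/zero-estimate comparison at the critical exponent --- you have asserted that ``under the optimal choice of parameters'' the inequality is violated, but no known choice of $T,S,L_0$ achieves this without the logarithmic loss you yourself describe.
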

In the celebrated work of Amoroso and Masser, in~\cite{AM16}, they studied Lehmer-type bounds for any $\alpha \in \bar{\Q}^\times$, which is not a root of unity, such that $\Q(\alpha)$ is Galois over $\Q$.
For  elliptic curves $E$ over $K$, in~\cite{AM80}, Anderson and Masser proved that $\hat{h}(P) \geq \frac{C}{D^{10} (\log{D})^{6}}$ for any non-torsion point $P \in E(\bar{K})$ of degree $D$.
	In \cite{S81}, Silverman sharpened this bound to $\frac{C}{D^{2}}$ when $K$ is abelian.
	% Later in \cite{L83}, Laurent improved the bound \eqref{AM_Lehmer bound CM} to $\frac{C (\log\log{D})^3}{D (\log{D})^{3}}$.
	In~\cite{M89}, Masser improved the bound of ~\cite{AM80} to $\frac{C}{D (\log{D})^{2}}$ when $K$ is abelian. In \cite{GM17}, Galateau and Mah\'e proved  Conjecture~\ref{Lehmer conjecture} for all non-torsion points $P \in E(\bar{K})$ 
	with $K(P)$ Galois over $K$. 
	
	For $A/K$ with dimension $g$, in \cite{M84}, Masser proved that $\hat{h}(P) \geq \frac{C}{ D^{2g+6+\frac{2}{g}}}$ for any non-torsion point $P \in A(\bar{K})$ of degree $D$. In \cite{M86}, Masser improved this bound to $ \frac{C}{D^{2g+1}( \log{D})^{2g}}$. Recently, in \cite{GR22}, Gaudron and R\'emond gave a completely explicit version of \cite{M86} and determined the constant $C$ explicitly in terms of $A, g$.
    In~\cite[Theorem 1.9]{GM17}, Galateau and Mah\'e improved the bound of \cite{M86} to $\frac{C}{D (D\log{D})^{\frac{2}{g}}}$, if $A$ is a power of an elliptic curve. In~\cite{DH00}, David and Hindry studied Lehmer-type bounds for CM abelian varieties $A$ of dimension $g$.
    % and prove that $\hat{h}(P) \geq C(A, \mcL) \times D^{-\frac{1}{g}} \times \left(\frac{\log \log (3D)}{\log (3D)} \right)^{\kappa(g)}$ for any non-torsion point $P \in A(\bar{K})$ of degree $D$, where $\kappa(g)=(2g . (g+1)!)^{g+2}$.
	
\begin{itemize}
	\item  	
	In the first part of this article, we study Lehmer-type bounds for  $\hat{h}$ on $A/K$ of dimension $g$. In Theorem~\ref{abelian mainthm3}, we prove that 
%	$\hat{h}(P) \geq \frac{C}{D^{2g+1}}$ for all non-torsion Galois points $P \in A(\bar{K})$ of degree $D \geq 1$. In Theorem ~\ref{abelian mainthm3}, we prove 
$\hat{h}(P) \geq \frac{C}{(D\log{D})^{2g}}$ for all non-torsion points $P \in A(\bar{K})$ of degree $D \geq 3$ with $K(P)$ Galois over $K$. In Corollary~\ref{abelian mainthm2}, we show that for any $\epsilon >0$, the inequality $\hat{h}(P) \geq \frac{C}{D^{2g+\epsilon}}$ holds for all non-torsion points $P \in A(\bar{K})$ of degree $D \geq 1$ with $K(P)$ Galois over $K$. Theorem~\ref{abelian mainthm3} and Corollary~\ref{abelian mainthm2} improve the bounds in ~\cite{M86} and ~\cite{M84}, respectively.
%	Also, generalize~\cite[Theorem 1.10]{GM17} to abelian varieties $A/K$.
\end{itemize}

\subsection{Counting the rational points on $A/K$}
%Let $\hat{h}$ be the  N\'eron-Tate height on $A(K)$.
%Let $H_K$ be the multiplicative height on the projective space $\mathbb{P}^n(K)$ 
%In \cite[Chap VIII, Theorem 5.11]{S86}, Silverman proved that, for any $B \in \R^+$, the set $N_{\mathbb{P}^n,K}(B) := \{P \in \mathbb{P}^n(K)| H_K(P) \leq B\}$ is finite. 
Let $\mcL$ be a symmetric ample line bundle on $A/K$. Let $H_K$, $\hat{h}_K$ be the multiplicative, N\'eron-Tate heights on $A(K)$ associated to $\mcL$. For any $B \in \R^+,$ define $N_{A,K}(B) := \# \{P \in A(K) : H_K(P) \leq B\}$ and $\hat{N}_{A,K}(B) := \# \{P \in A(K): \hat{h}_K(P) \leq \log B\}$. By~\cite[Theorem B.2.3]{HS00},  $N_{A,K}(B)$ is finite. For elliptic curve $E/K$, there is a vast amount of literature on estimating $N_{E,K}(B)$
and $\hat{N}_{E,K}(B)$ for $B \gg 0$.

In \cite{BZ04}, Bombieri and Zannier proved that, for any $E/ \Q$ with $\#E(\Q)[2]=4$, there exist constants
$C, C^\prime \in \R^+$ such that $\hat{N}_{E, \Q}(B) \leq B^{ \left(\frac{C}{\sqrt{\log \log B}} \right)}$ and  
 $N_{E, \Q}(B) \leq B^{ \left(\frac{C^\prime}{\sqrt{\log \log B}} \right)}$ for all $B \gg 0$. These proofs used a lower bound of $\hat{h}_K$ on $E(K) \setminus E_{\tors}(K)$ (cf. \cite[Theorem 0.3]{HS88}). In~\cite{N21}, Naccarato improved the exponents to $\frac{*}{\log \log B}$ for all $E/ \Q$ with $E(\Q)[2] \neq 0$ and these proofs used an improved lower bound of $\hat{h}_K$ on $E(K) \setminus E_{\tors}(K)$ (cf. \cite[Theorem 2]{P06}).

\begin{itemize}
 \item In the second part of this article, we prove a variant of the result in~\cite{N21} for abelian varieties $A/K$ with $A(K)[p]\neq 0$ for some $p\in \mathbb{P}$ (cf. Theorem~\ref{main result2 abelian variety for  over K}). In the proof, we need a covering lemma (cf. Lemma~\ref{covering lemma}),  a result of Gaudron and R\'emond for the lower bound of $\hat{h}_K$ on $ A(K) \setminus A_\tors(K)$ (cf. Theorem~\ref{BS lower bound on the canonical height on A(K)}), and a result of Ooe and Top for the upper bound of the minimal number of generators of $A(K)/mA(K)$ (cf. \cite[Theorem 2]{OT89}).

Next, we show that the constants in Theorem~\ref{main result2 abelian variety for  over K} are computable  for elliptic curves over $K$ (cf. Theorem~\ref{main result2 for elliptic curve over K})
and we extend it to the product of elliptic curves over $K$ (cf. Proposition~\ref{main result1 for product of elliptic curves over K}). Here, instead of using the result of Gaudron and R\'emond, we could use a result of Petsche for an explicit lower bound of $\hat{h}_K$ on $ E(K) \setminus E_\tors(K)$ in terms of Szpiro ratio of $E$(cf. \cite[Theorem 2]{P06}), which makes the constants in Theorem~\ref{main result2 for elliptic curve over K} explicitly computable. Finally, in Proposition~\ref{prop for lower bound for product of elliptic curves}, we gave an explicit lower bound for $\hat{h}_K$ on the product of elliptic curves over $K$ in terms of Szpiro ratios of these elliptic curves. To the best of the author's knowledge, such explicit lower bounds are not known for $A$ over $K$(cf. ~\cite[Theorem 0.1]{BS04}).
\end{itemize}

\section{Lehmer-type bounds for abelian varieties $A$ over $K$}
\label{section for Lehmer Conjecture for abelian varieties}
 Let $A$ be an abelian variety defined over $K$, denoted by $A/K$, with a symmetric ample line bundle $\mcL$. Let $\hat{h} : A(\bar{K}) \rightarrow [0, \infty)$ be the N\'eron-Tate height on $A(\bar{K})$ associated to $\mcL$. Then $\hat{h}$ is a positive definite quadratic form on $A(\bar{K})/ A_{\tors}(\bar{K})$ (cf. \cite[Theorem B.5.3]{HS00}), where $A_\tors(\bar{K})$ denote torsion subgroup of $A(\bar{K})$. In this section, we study Lehmer-type bounds of $\hat{h}$ for all non-torsion points $P \in A(\bar{K})$ with $K(P)$ Galois over $K$. 
First, we recall a result of Masser in~\cite{M86}.
\begin{thm}
	\label{Mas bound for points of small height}
Let $A$, $K$ be as in Conjecture~\ref{Lehmer conjecture}. Then there exists a constant $c=c(A,K) \in \R^+$ (depending on $A,K$) such that for any extension $L$ of $K$ with degree at most $D$, we have
$\# \{P \in A(L): \hat{h}(P) \leq \frac{1}{cD}\} \leq cD^g (\log{D})^{g}$.
\end{thm}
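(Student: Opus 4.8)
I would follow the strategy of Masser in \cite{M86}, the point of which is to convert a statement about the \emph{canonical} height into a counting problem for the \emph{Weil} height, where Northcott-type estimates apply. Fix a projective embedding of $A$ attached to a suitable power of $\mcL$, and let $h$ be the associated Weil height, so that $\hat{h} = h + O_{A,\mcL}(1)$. The key (and slightly counterintuitive) observation is that a point of small canonical height need not have small Weil height, but it does have Weil height bounded by a fixed constant $C_0 = C_0(A,\mcL)$, independent of the threshold $\tfrac{1}{cD}$. Hence every point we wish to count lies in the fixed Northcott box $\{P : h(P) \leq C_0\}$ and has degree at most $D' := D\,[K:\Q]$ over $\Q$.

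Next I would split the set $S = \{P \in A(L) : \hat{h}(P) \leq \tfrac{1}{cD}\}$ into its torsion and non-torsion parts. The torsion part is bounded by $\#A_\tors(L)$, and here I would invoke the known uniform upper bounds for the torsion of abelian varieties over number fields, which give $\#A_\tors(L) \leq c_1 (D')^{g}(\log D')^{g} \leq c_2\, D^{g}(\log D)^{g}$; this is already where both the exponent $g$ and the logarithmic factor in the final bound enter. For a non-torsion point $P \in S$ I would \emph{amplify} by multiplication: choosing $N \asymp \sqrt{cD}$, each $[n]P$ with $0 \leq n < N$ still lies in $A(L)$, still has degree $\leq D'$ over $\Q$, and satisfies $\hat{h}([n]P) = n^{2}\hat{h}(P) \leq 1$, hence $h([n]P) \leq C_0 + 1$; moreover $n \mapsto [n]P$ is injective on $\{0,\dots,N-1\}$ since the values $n^{2}\hat{h}(P)$ are distinct. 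Thus $m$ non-torsion points of $S$ produce at least about $mN$ points inside the fixed Northcott box, counted with a multiplicity that is controlled polynomially in $D$ (again via the torsion estimate, since $[n]P = [n']P'$ forces the relation $[n]P - [n']P' = 0$). Comparing with a sufficiently sharp quantitative count of the points of $A(\bar{K})$ of Weil height $\leq C_0 + 1$ and degree $\leq D'$ — sharp in the sense discussed below — then yields $m \leq c_3\, D^{g}(\log D)^{g}$, and adjusting $c$ to absorb all constants completes the proof.

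The main obstacle is the efficiency of this last counting step. A crude Northcott bound for points of bounded height and degree $d$ in a projective space $\mathbf{P}^{N}$ grows like a power of $d$ whose exponent depends on $N$, and any projective embedding of $A$ has $N$ exponential in $g$ — far worse than the target exponent $g$, so the naive comparison fails. Getting the correct exponent (with only a $(\log D)^{g}$ loss) is exactly the technical heart of Masser's argument: one must use that the points in question lie on $A$, not merely in ambient projective space, so that the $g$ internal parameters of $A$ — rather than the embedding dimension — govern the count, which in practice means combining the amplification above with geometry of numbers in the Mordell--Weil lattice and a zero estimate on powers of $A$. The remaining ingredients — the reduction to bounded Weil height, the torsion estimate, and the bookkeeping of collisions under $[n]$ — are comparatively routine.
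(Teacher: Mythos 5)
First, a point of context: the paper does not prove this statement at all --- it is recalled verbatim as Masser's theorem (the 1986 letter to Bertrand, \cite{M86}), with the constant made explicit later by Gaudron and R\'emond \cite{GR22}. So there is no in-paper argument to compare against, and your proposal has to stand on its own as a proof of Masser's counting theorem. As written, it does not: it contains two gaps that are not bookkeeping issues but the actual content of the theorem. The first is circularity in the torsion step. The bound $\#A_{\tors}(L)\le c\,D^{g}(\log D)^{g}$ for $[L:K]\le D$ that you invoke as a ``known uniform upper bound'' is precisely the torsion case of the statement being proved (torsion points have $\hat{h}=0$, so they all lie in the set to be counted); the only known source for a bound of this quality is \cite{M86} itself. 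Reduction-mod-$\mfp$ or effective Chebotarev arguments give genuinely weaker exponents, so you cannot quote this estimate without assuming the theorem.

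The second gap is the final comparison, which you yourself flag as ``the technical heart'' and then leave to an unspecified ``zero estimate on powers of $A$.'' Beyond being missing, the architecture around it cannot work as described. Your amplification produces only $N\asymp\sqrt{cD}$ multiples per non-torsion point, so even with perfect collision control you would conclude $m\lesssim T/\sqrt{D}$, where $T$ is the number of points of $A$ of Weil height $\le C_0+1$ and degree $\le D[K:\Q]$. But $T$ counts a strictly larger and harder set than the one in the theorem (it contains all small points plus every point of moderate Weil height over every such field), so bounding $T$ by anything like $D^{g+1/2}(\log D)^{g}$ is not an available input --- it is again a statement of at least the same depth. Masser's actual mechanism is not a Northcott comparison at all: he constructs an auxiliary section vanishing on the set of small points and bounds their number directly via a zero estimate on the abelian variety, which is where the exponent $g$ and the $(\log D)^{g}$ really come from. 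Finally, your collision control is also unsound as stated: an identity $[n]P=[n']P'$ between multiples of two distinct non-torsion points is not ``forced'' into a torsion relation, so the claimed polynomial bound on multiplicities has no justification. In short, the reduction to bounded Weil height and the injectivity of $n\mapsto[n]P$ are fine, but every step that produces the exponent $g$ is either circular or absent.
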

In \cite[Th\'eor\'eme 1.1]{GR22}, Gaudron and R\'emond determined the constant $c$ of Theorem~\ref{Mas bound for points of small height} explicitly in terms of $A$ and $g$. Now, we state the main result of this section.
\begin{thm}
    	\label{abelian mainthm3}
    	Let $A/K$ be an abelian variety of dimension $g$.
    	Then there exists a constant $C=C(A, K) \in \R^+$ (depending on $A,K$) such that
    	$\hat{h}(P) \geq \frac{C}{(D \log{D})^{2g}},$ for all non-torsion points
    	$P \in A(\bar{K})$ of degree $D \geq 3$ with $K(P)$ Galois over $K$.
    \end{thm}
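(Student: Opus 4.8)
The plan is to feed Masser's counting estimate (Theorem~\ref{Mas bound for points of small height}, with the explicit constant of \cite{GR22}) into a counting argument that exploits the Galois hypothesis: all conjugates $\sigma P$ of $P$ have the \emph{same} N\'eron--Tate height, so from several $\Q$-linearly independent conjugates one can manufacture many points of the same small height, whereas the multiples $mP$ of the single point $P$ only recover Masser's bound $C/(D^{2g+1}(\log D)^{2g})$. Concretely, fix a non-torsion $P$, put $L=K(P)$, $G=\Gal(L/K)$, $D=|G|\ge 3$, and let $\overline{\Gamma}\subseteq A(L)/A_{\tors}(L)$ be the subgroup generated by the classes $\overline{\sigma P}$, $\sigma\in G$. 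Since $\hat{h}$ is $\Gal(\bar{K}/K)$-invariant and a positive definite quadratic form on $A(\bar{K})/A_{\tors}(\bar{K})$ (so $\sqrt{\hat{h}}$ satisfies the triangle inequality), $\overline{\Gamma}$ is a torsion-free $\Z[G]$-module with $\overline{\Gamma}\otimes\Q=\Q[G]\cdot\overline{P}$, hence of rank $r$ with $1\le r\le D$. I would split the argument according to whether $r\ge 2$ or $r=1$.

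If $r\ge 2$, choose $\tau\in G$ with $\overline{P},\overline{\tau P}$ linearly independent over $\Q$ (possible since the classes $\overline{\sigma P}$ span an $r$-dimensional space), and for an integer $N\ge 1$ consider the $(N+1)^2$ points $aP+b\,\tau P\in A(L)$ with $0\le a,b\le N$; these are pairwise distinct (their images in $A(L)/A_{\tors}(L)$ are, by independence of $\overline{P},\overline{\tau P}$) and satisfy $\hat{h}(aP+b\tau P)\le (a+b)^2\hat{h}(P)\le 4N^2\hat{h}(P)$. Assume for contradiction $\hat{h}(P)<C/(D\log D)^{2g}$ with $C=C(A,K)>0$ small, and let $c=c(A,K)$ be the constant of Theorem~\ref{Mas bound for points of small height}. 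Taking $N=\lfloor(4cD\hat{h}(P))^{-1/2}\rfloor\ge 1$ makes every one of these points have height $\le 1/(cD)$, so Theorem~\ref{Mas bound for points of small height} applied to $L$ gives $(N+1)^2\le cD^g(\log D)^g$; on the other hand $N+1>(4cD\hat{h}(P))^{-1/2}$ together with the bound on $\hat{h}(P)$ gives $(N+1)^2>D^{2g-1}(\log D)^{2g}/(4cC)$. These two inequalities force $D^{g-1}(\log D)^g<4c^2C$, which fails for $D\ge 3$ once $C<\log 3/(4c^2)$; hence $\hat{h}(P)\ge C/(D\log D)^{2g}$ in this case.

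If $r=1$, then $G$ acts on $\overline{\Gamma}\cong\Z$ through a character $\epsilon\colon G\to\{\pm1\}$, i.e. $\sigma P-\epsilon(\sigma)P\in A_{\tors}(L)$ for every $\sigma\in G$. I would form the twisted trace $R:=\sum_{\sigma\in G}\epsilon(\sigma)\,\sigma P\in A(L)$. A direct computation gives $R-DP=\sum_{\sigma\in G}\epsilon(\sigma)\bigl(\sigma P-\epsilon(\sigma)P\bigr)\in A_{\tors}(L)$, so $R$ is non-torsion and $\hat{h}(R)=D^2\hat{h}(P)$, while re-indexing the sum gives $\rho(R)=\epsilon(\rho)R$ for all $\rho\in G$, so $R$ lies in the fixed field of $\ker\epsilon$, an extension of $K$ of degree at most $2$. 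By Northcott's theorem (cf.\ \cite[Theorem B.2.3]{HS00}) there is a constant $c_1=c_1(A,K)>0$ with $\hat{h}\ge c_1$ on every non-torsion point of degree at most $2$ over $K$, whence $\hat{h}(P)=\hat{h}(R)/D^2\ge c_1/D^2\ge C/(D\log D)^{2g}$ for $C\le c_1$ and $D\ge 3$. Taking $C=C(A,K)$ to be the minimum of the finitely many thresholds arising in the two cases completes the proof.

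I expect the only real subtlety to be the case $r=1$: one must recognise that the sole obstruction to running the counting argument -- all conjugates of $P$ agreeing up to torsion and a global sign -- is precisely the configuration that the twisted trace $R$ collapses, landing in a field of degree at most $2$ where Northcott's finiteness supplies the required lower bound. Everything else (the triangle inequality for $\sqrt{\hat{h}}$, the distinctness count for the points $aP+b\tau P$, and the bookkeeping of $C$) is routine; in fact the same mechanism applied with up to $\min(r,\cdot)$ independent conjugates yields a somewhat smaller exponent in $D$, but the argument above already gives the stated bound.
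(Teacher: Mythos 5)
Your argument is correct, but it follows a genuinely different route from the paper's. The paper runs a single unified counting argument with no case distinction: it takes the set $R$ of conjugates of $P$ that are pairwise incongruent modulo $H=A_{\tors}(L)$, observes that $\#R\cdot\#H\ge D$, and counts the $M\cdot\#R\cdot\#H$ points $mQ+T$ ($Q\in R$, $T\in H$, $1\le m\le M$ with $M\approx cD^{g-1}(\log D)^g$); the torsion translates automatically compensate when the conjugates collapse modulo torsion, which is exactly the situation you have to treat separately. Your dichotomy on the rank $r$ of the conjugate module replaces this: when $r\ge 2$ you extract the needed $\approx D^{2g-1}(\log D)^{2g}$ small points from a two\nobreakdash-dimensional grid $aP+b\tau P$ spanned by two independent conjugates (a one-dimensional set of multiples would only recover Masser's exponent $2g+1$, as you note), and when $r=1$ you descend via the twisted trace $R=\sum_\sigma\epsilon(\sigma)\sigma P$ to a point of degree at most $2$ over $K$ with $\hat h(R)=D^2\hat h(P)$ and invoke Northcott — which is precisely the paper's Lemma~\ref{abelian mainthm1} with $f=2$. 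All the individual steps check out: the distinctness of the grid points follows from $\Z$-independence of $\overline{P},\overline{\tau P}$ in the torsion-free quotient, the identity $R-DP\in A_{\tors}(L)$ and the relation $\rho(R)=\epsilon(\rho)R$ are correct, and the final numerical contradiction $D^{g-1}(\log D)^g<4c^2C$ indeed fails for $D\ge3$ once $C<\log 3/(4c^2)$. Two small points worth making explicit: you should normalise $c\ge1$ at the outset (the paper notes Theorem~\ref{Mas bound for points of small height} holds for any larger constant) so that your claim $N\ge1$ is justified — although in fact your two inequalities on $(N+1)^2$ already yield the contradiction even if $N=0$ — and in the rank-one case the identification $A(L)^{\ker\epsilon}=A(L^{\ker\epsilon})$ deserves a word (standard Galois descent for points). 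The trade-off is that the paper's proof is shorter and case-free, while yours isolates more transparently where the Galois hypothesis enters (conjugates lie in $L$ and share the same height) and what the genuine obstruction to the counting argument is.
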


	\begin{proof}[Proof of Theorem~\ref{abelian mainthm3}]
Let $c \in \R^+$ be as in Theorem~\ref{Mas bound for points of small height}.
We shall prove the theorem by contradiction. Suppose there exists a non-torsion point $P \in A(\bar{K})$ with $L := K(P)$ Galois over $K$ of degree $D$ and
	\begin{equation}
		\label{contr assumption for mainthm3}
		\hat{h}(P) < \frac{C}{(D\log{D})^{2g}},
		\end{equation}
 where $C:=\frac{1}{c(c+1)^2}$. Let $H:=A_{\tors}(L)$ and $R$ be the maximal set of conjugates of $P$ over $K$ that are mutually incongruent modulo $H$. Let $t:= \#H, r:= \#R$. 
 
 We now show that $D=[L:K] \leq rt$.
 Let $G:=\Gal(L/K)= \{ \sigma_1, \dots \sigma_D \}$, where $\sigma_1, \dots \sigma_D$ are all the $K$-embeddings of $L$ in $\bar{K}$. Define a relation $\sim$ on $G$ by $\sigma_i \sim \sigma_j$ if $P^{\sigma_i}-P^{\sigma_j} \in H$. Clearly, $\sim$ is an equivalence relation on $G$. Then $G= \bigsqcup [\sigma_i]$ is a disjoint union of distinct equivalence classes $[\sigma_i]$. By definition of $H$, $\# [\sigma_i] \leq \# H=t$ for all $i$. By the definition of $R$, the number of distinct equivalence classes $\leq \# R=r$. Hence, $\# G=D \leq rt$.

 Let $M=1 +[cD^{g-1}(\log{D})^{g}]$. Since $D \geq 3$, we have
    \begin{equation}
    	\label{bound on M}
    	cD^{g-1}(\log{D})^{g} < M \leq (c+1)D^{g-1}(\log{D})^{g}.
    \end{equation}
    Let $S:= \{mQ+T: Q \in R,\ T \in H, \text{ and } m \in \Z \text{ with } 1 \leq m \leq M\}$. We will now show that all the elements of $S$ are distinct, which gives us the cardinality of $S$.

    Suppose $m_1Q_1+T_1= m_2Q_2+T_2$, for some $Q_1, Q_2 \in R$, $T_1, T_2 \in H$, and $m_1, m_2 \in \Z$ with $1 \leq m_1, m_2 \leq M$. By definition of $H$, we get  $tm_1Q_1=t m_2Q_2$. Since $\hat{h}$ is Galois invariant and $\hat{h}(P)>0$, we get $m_1= m_2$. This implies that $m_1(Q_1-Q_2)= T_2-T_1$, hence $Q_1-Q_2 \in H$. Since $Q_1, Q_2 \in R$, $Q_1=Q_2$ and hence $ T_1=T_2$. Therefore, $\#S= Mrt$. By \eqref{bound on M} and since $rt \geq D$, we get
    \begin{equation}
    	\label{lower bound on S}
   	 \#S > cD^g(\log{D})^{g}.
    \end{equation}
     We shall now find an upper bound for $S$.
Let $mQ+T \in S$. By applying the triangle inequality of the norm associated with the N\'eron-Tate height on $A(L)$ (cf.~\cite[Theorem B.5.1]{HS00}), we have
$\left( \hat{h}(mQ+T) \right)^\frac{1}{2} \leq \left( \hat{h}(mQ) \right)^\frac{1}{2}+\left( \hat{h}(T) \right)^\frac{1}{2} = m \hat{h}(P)^\frac{1}{2}$. This implies
$$\hat{h}(mQ+T) < \frac{M^2C}{(D\log{D})^{2g}} < \frac{C(c+1)^2 D^{2g-2}(\log{D})^{2g} }{(D\log{D})^{2g}}=\frac{1}{cD^2}< \frac{1}{cD}.$$
The first (resp., second) inequality follows from~\eqref{contr assumption for mainthm3} (resp., \eqref{bound on M}). Note that $m \leq M$, $C=\frac{1}{c(c+1)^2}$. So, $S \subseteq \left\{ P \in A(L): \hat{h}(P) \leq \frac{1}{cD}\right\}.$
By Theorem~\ref{Mas bound for points of small height}, we have
\begin{equation*}
	\label{upper bound for S}
	\#S \leq  c D^g (\log{D})^{g}.
\end{equation*}
This is a contradiction to the lower bound for $S$ in \eqref{lower bound on S}, hence we are done.
%Finally by \eqref{lower bound for S} and \eqref{upper bound for S}, we have
%$c D (\log{D})^{g} < |S| \leq c D^g (\log{D})^{g}$. {\color{red}This gives $D < D^g$, which is a contradiction. This completes the proof of theorem.}
\end{proof}
%The proof of Theorem~\ref{abelian mainthm3} is similar to that of \cite[Corollary 4]{M89}.

Now, we recall a definition.
\begin{dfn}
	\label{def for bounded degree}
	%Let $A/K$ be an abelian variety.
	We say a point $P \in A(\bar{K})$ is of bounded degree $f \in \N$, if there exists a finite extension $L/K$ of degree at most $f$ such that $P \in A(L)$.
\end{dfn}
In the next lemma,  we give an absolute lower bound of $\hat{h}(P)$ for $ P \in A(\bar{K}) \setminus A_\tors(\bar{K})$ is of bounded degree. More precisely;
\begin{lem}
	\label{abelian mainthm1}
	Let $A/K$ be an abelian variety, and let $f\geq 1$ be an integer. Then there exists a constant $C=C(A, K, f) \in \R^+$ (depending on $A,K,f$) such that $\hat{h}(P) \geq C$ for any $ P \in A(\bar{K}) \setminus A_\tors(\bar{K})$ of bounded degree $f$.
	%	$\hat{h}(P) \geq \frac{C}{D^g (\log{D})^{2g}}$.	
\end{lem}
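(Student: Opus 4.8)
The plan is to deduce the lemma from Northcott's finiteness theorem together with the fact, recalled above, that $\hat{h}$ is a positive definite quadratic form on $A(\bar{K})/A_{\tors}(\bar{K})$. The key observation is that, by Definition~\ref{def for bounded degree}, a point $P \in A(\bar{K})$ is of bounded degree $f$ precisely when $[K(P):K] \leq f$ (one direction takes $L = K(P)$; the other uses $K(P)\subseteq L$). Consequently the set of \emph{all} non-torsion points of bounded degree $f$ with canonical height at most $1$ is a \emph{single} finite set, even though infinitely many number fields $L$ with $[L:K] \leq f$ occur, and the minimum of $\hat{h}$ over this finite set will be the desired constant.

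First I would fix an integer $n\geq 1$ for which $\mcL^{\otimes n}$ is very ample and use the resulting projective embedding to compare $\hat{h}$ with the pullback of the standard height on some $\mathbb{P}^N$ up to a bounded error; combined with Northcott's theorem in its form for points of bounded degree (see \cite[Theorem B.2.3]{HS00}), this shows that
$$\Sigma := \{\, P \in A(\bar{K}) : [K(P):K] \leq f,\ \hat{h}(P) \leq 1 \,\}$$
is finite, and hence so is $\Sigma' := \Sigma \setminus A_{\tors}(\bar{K})$. Since $\hat{h}$ is positive definite on $A(\bar{K})/A_{\tors}(\bar{K})$ (cf.~\cite[Theorem B.5.3]{HS00}), each $P \in \Sigma'$ has $\hat{h}(P) > 0$, so
$C := \min\bigl(1,\ \min_{P \in \Sigma'} \hat{h}(P)\bigr)$
is a well-defined positive real number depending only on $A, K, f$ (read the inner minimum as $1$ if $\Sigma'$ is empty).

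To conclude, let $P \in A(\bar{K}) \setminus A_{\tors}(\bar{K})$ be of bounded degree $f$. If $\hat{h}(P) > 1$ then $\hat{h}(P) > 1 \geq C$; if $\hat{h}(P) \leq 1$ then $P \in \Sigma'$, so $\hat{h}(P) \geq \min_{Q \in \Sigma'} \hat{h}(Q) \geq C$. Either way $\hat{h}(P) \geq C$, as claimed. I do not expect a genuine obstacle: the only delicate point is that the bound must be uniform over the infinitely many extensions $L/K$ of degree at most $f$ through which $P$ can be defined, and this uniformity is exactly what the bounded-degree version of Northcott's theorem delivers. (Note that neither Theorem~\ref{abelian mainthm3} nor Theorem~\ref{Mas bound for points of small height} can be substituted here: the former requires $K(P)/K$ to be Galois, while the latter only bounds the \emph{number} of small points inside one fixed field rather than over all $L$ simultaneously.)
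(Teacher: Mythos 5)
Your proof is correct, but it follows a different route from the one the paper actually writes down. You argue via Northcott's theorem for points of bounded degree: after comparing $\hat{h}$ with a Weil height attached to a projective embedding (bounded difference, \cite[Theorem B.5.1]{HS00}), the set $\Sigma$ of points of degree at most $f$ over $K$ and canonical height at most $1$ is finite, and positive definiteness of $\hat{h}$ modulo torsion makes the minimum over the non-torsion points of $\Sigma$ a positive constant. This is exactly the ``straightforward'' proof that the authors allude to when they write that the lemma ``does not necessarily need Theorem~\ref{Mas bound for points of small height}''; they then deliberately give a different argument. Their proof is by contradiction using Masser's counting theorem: assuming $\hat{h}(P)<1/c^4$, they form the set $S(P)=\{pP : p \text{ prime},\ 1<p\le c^{3/2}D^{-1/2}\}$, observe that all its elements have height at most $1/(cD)$ and are distinct (so $\#S(P)=\pi(m)\gtrsim m/\log m$), and contradict the upper bound $cD^g(\log D)^g$ from Theorem~\ref{Mas bound for points of small height} by taking $c$ large in terms of $f$. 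The trade-off is the usual one: your Northcott argument is shorter and more standard but produces a constant only as the minimum over an uneffectivized finite set, whereas the paper's argument ties $C$ to the constant $c$ of Theorem~\ref{Mas bound for points of small height}, which Gaudron and R\'emond have made explicit, so it yields in principle a computable $C$. One small correction to your closing parenthesis: Theorem~\ref{Mas bound for points of small height} \emph{can} be used here --- it applies to the single field $L=K(P)$ of the hypothetical small point, and the paper's pigeonhole on $S(P)\subseteq A(L)$ is precisely how; what is true is that it does not give the lemma without that extra counting step.
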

The proof of the lemma above is straightforward, and it does not necessarily need Theorem~\ref{Mas bound for points of small height}. However, we give a proof using Theorem~\ref{Mas bound for points of small height}.
\begin{proof}
	Let $c \in \R^+$ be as in Theorem~\ref{Mas bound for points of small height}.
	Since Theorem~\ref{Mas bound for points of small height} is true
	for all constants $ \geq c$, we assume that $c \geq (17f)^{\frac{2}{3}}$. We shall show that
	$\hat{h}(P) \geq C$, where $C:= \frac{1}{c^4}$, for any non-torsion points $P \in A(\bar{K})$ of bounded degree $f$. We prove this statement by contradiction. Suppose there exists a non-torsion point $P \in A(\bar{K})$
	of bounded degree $f$ with
	\begin{equation}
		\label{contr assumption1}
		\hat{h}(P) < \frac{1}{c^4}.
	\end{equation}
	Take $L :=K(P)$. Since $P$ is of bounded degree $f$, $D:=[L:K] \leq f$.
	Let
	\begin{equation}
		\label{set S(p)}
		S(P) := \left\{pP : p \in \mathbb{P},  1 < p \leq m:=c^\frac{3}{2} D^{-\frac{1}{2}} \right\}.
	\end{equation}
	Since $\hat{h}$ is a quadratic form, $\hat{h}(pP)= p^2\hat{h}(P)$ for $p \in \mathbb{P}$.
	By~\eqref{contr assumption1} and~\eqref{set S(p)}, we get
	$\hat{h}(pP)  <  \frac{p^2}{c^4 } \leq \frac{c^3D^{-1}} {c^4}= \frac{1}{cD}.$
	Hence, $S(P) \subseteq \left\{ Q \in A(L): \hat{h}(Q) \leq \frac{1}{cD}\right\}.$ By Theorem~\ref{Mas bound for points of small height}, we have
	\begin{equation}
		\label{upper bound for S(p)}
		\#S(P) \leq cD^g (\log{D})^{g}.
	\end{equation}
	We now calculate a lower bound of $S(P)$ to get a contradiction. Since $P \in A(\bar{K})$ is a non-torsion point, we get $pP \neq p' P$ for primes $p \neq p'$ with $1 < p,\ p' \leq m$. This implies that $\#S(P)= \pi (m)$.
	Since $c \geq (17f)^{\frac{2}{3}}$, we get $c^\frac{3}{2} \geq 17f \geq  17D^\frac{g-1}{2g}$, and hence $m=c^\frac{3}{2} D^{-\frac{g-1}{2g}} \geq 17$. By~\cite[Corollary 1]{RS62}, we get $\pi (m) \geq \frac{m}{\log m} $.
	%		we get the number of distint primes $p \leq m$ i.e., $\pi (m)
	%		 Note that here $m \geq 17$.
	So $\#S(P) \geq  \frac{m}{\log m} $.
	%	    Again as $m \geq 17> e^2$, $\log m \geq
	%		So	$\#S(p) \geq  D (\pi (m)- \frac{\log m}{\log 2}) \geq  D \frac{\pi (m)}{2} \geq D(\frac{m}{2\log m})$.
	Since $m=c^\frac{3}{2} D^{-\frac{g-1}{2g}}$, we have
	\begin{equation}
		\label{lower bound for S(p)}
		\#S(P) \geq \frac{c^\frac{3}{2}D^{-\frac{1}{2}} }{\frac{3}{2}\log c -{\frac{1}{2}}\log D } \geq \frac{c^\frac{3}{2}D^{-\frac{1}{2}} }{\frac{3}{2}\log c }.
	\end{equation}
	By~\eqref{upper bound for S(p)} and \eqref{lower bound for S(p)}, we have
	\begin{equation}
		\label{contradiction-mainthm2}
		\frac{2 \sqrt{c}}{3 \log c}  \leq D^{g+\frac{1}{2}} (\log{D})^{g} \leq f^{g+\frac{1}{2}}  (\log f)^{g}.
	\end{equation}
	Since $\frac{\sqrt{c}}{\log c}$ is an increasing function of $c$ for $c\geq e^2
	$, we could choose $c \gg 0$ at the beginning of the proof to violate \eqref{contradiction-mainthm2}. This completes the proof of the lemma.
	%
	%{	\color{red}
		%	\begin{equation}
			%		\label{inequality for c}
			%		\frac{2c^\frac{1}{2}}{3 \log c} \leq d^{g+\frac{g-1}{2g}}  (\log d)^{g}.
			%	\end{equation}
		%By hypothesis on $c$, we have $\log c >1$ and $ c^\frac{1}{2}>(\frac{3d^{g+\frac{g-1}{2g}}
			%	(\log d)^g}{2}) $. This gives $\frac{2c^\frac{1}{2}}{3 \log c} > d^{g+\frac{g-1}{2g}}  (\log d)^{g}$,
		%which contradicts \eqref{inequality for c}.
		%	 Hence the proof of the theorem follows.}
	%	This gives $\frac{2}{3c^\frac{1}{2}}=\frac{2c^\frac{1}{2}}{3c} < d^{g+\frac{g-1}{2g}}  (\log d)^{g}$. Hence,
	%	As $c \geq (17)^{\frac{2}{3}} >e$, we get $3 \log c >3$. Hence $c^\frac{1}{2} < \frac{d^{g+\frac{g-1}{2g}} (\log d)^g}{6}$, which is a contradiction.
	%	This completes the proof of the theorem.
\end{proof}
 
\begin{remark}
\label{RemarkBogomolov}
Note that, Lemma~\ref{abelian mainthm1} can be reinterpreted as follows.	A set $X$ is said to have Bogomolov property if there exists a constant $C$ (depending on $X$) such that $\hat{h}(P) \geq C$ for all non-torsion points $ P \in A(X)$. For any abelian variety $A/K$, $f \geq 1$ be an integer, let $A_{K,f}:=\{ P\in A(\bar{K}) : [K(P) : K]\leq f \}$. By Lemma~\ref{abelian mainthm1}, the set $A_{K,f}$ has Bogomolov property.
\end{remark}

The following result is an application of Theorem~\ref{abelian mainthm3} and Lemma~\ref{abelian mainthm1}.

%\begin{cor}
%	\label{abelian mainthm2}
%	Let $A/K$ be an abelian variety of dimension $g$. Then there exists a constant $C=C(A, K) \in \R^+$ (depending on $A,K$) such that $\hat{h}(P) \geq \frac{C}{D^{2g+1}},$
%	for all non-torsion points $P \in A(\bar{K})$ of degree $D \geq1$ with $K(P)$ is Galois over $K$.
%\end{cor}
 
\begin{cor}
	\label{abelian mainthm2}
	Let $A/K$ be an abelian variety of dimension $g$ and $\epsilon \in \R^+$ be any positive real number. Then there exists a constant $C=C(A, K) \in \R^+$ (depending on $A,K$) such that $\hat{h}(P) \geq \frac{C}{D^{2g+\epsilon}}$
	for all non-torsion points $P \in A(\bar{K})$ of degree $D \geq1$ with $K(P)$ Galois over $K$.
\end{cor}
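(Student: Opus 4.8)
The plan is to bootstrap from Theorem~\ref{abelian mainthm3}, which already gives $\hat{h}(P)\ge C_1/(D\log D)^{2g}$ for degrees $D\ge 3$, and to mop up the two remaining degrees $D\in\{1,2\}$ with the Bogomolov-type estimate of Lemma~\ref{abelian mainthm1}. Fix $\epsilon>0$. The first step is the elementary remark that $(\log D)^{2g}/D^{\epsilon}\to 0$ as $D\to\infty$, so that
\[
\kappa:=\sup_{D\ge 3}\frac{(\log D)^{2g}}{D^{\epsilon}}<\infty,
\]
a quantity depending only on $g$ and $\epsilon$; equivalently $(D\log D)^{2g}\le \kappa\,D^{2g+\epsilon}$ for every integer $D\ge 3$.

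For the main case, let $P\in A(\bar{K})$ be non-torsion of degree $D\ge 3$ with $K(P)/K$ Galois. Theorem~\ref{abelian mainthm3} furnishes a constant $C_1=C_1(A,K)\in\R^+$ with $\hat{h}(P)\ge C_1/(D\log D)^{2g}$, and combining this with the displayed inequality gives $\hat{h}(P)\ge (C_1/\kappa)\,D^{-(2g+\epsilon)}$. This settles all degrees $D\ge 3$ with constant $C_1/\kappa$.

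It remains to handle $D\in\{1,2\}$. A non-torsion point $P\in A(\bar{K})$ with $[K(P):K]\le 2$ is of bounded degree $2$ in the sense of Definition~\ref{def for bounded degree} (and any such extension is automatically Galois, so these points are precisely the $D\le 2$ part of the statement), hence Lemma~\ref{abelian mainthm1} applied with $f=2$ yields $C_2=C_2(A,K,2)\in\R^+$ with $\hat{h}(P)\ge C_2\ge C_2\,D^{-(2g+\epsilon)}$, the last inequality because $D\ge 1$. Taking $C:=\min\{C_1/\kappa,\,C_2\}$ then proves the corollary for every $D\ge 1$, with $C$ depending only on $A$ and $K$ once $\epsilon$ is fixed. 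I do not expect any genuine obstacle here: the only analytic input is the finiteness of $\kappa$, a standard calculus fact, and the only subtlety is remembering that $\epsilon$ may be arbitrarily small, so the factor $(\log D)^{2g}$ must honestly be absorbed into $D^{\epsilon}$, and that degrees $1$ and $2$ fall outside the range $D\ge 3$ of Theorem~\ref{abelian mainthm3} and therefore need the separate bound from Lemma~\ref{abelian mainthm1}.
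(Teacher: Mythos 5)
Your proof is correct and follows essentially the same route as the paper: Theorem~\ref{abelian mainthm3} for large degrees after comparing $(\log D)^{2g}$ with $D^{\epsilon}$, and Lemma~\ref{abelian mainthm1} to absorb the finitely many small degrees. The only (cosmetic) difference is that you place the case split at $D=3$ and push the factor $\sup_{D\ge 3}(\log D)^{2g}/D^{\epsilon}$ into the constant, whereas the paper splits at a threshold $M(g,\epsilon)$ beyond which $(\log D)^{2g}\le D^{\epsilon}$; your phrasing is in fact more careful about the fact that the resulting constant depends on $\epsilon$ as well as on $A$ and $K$.
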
 
%small improvements on [Mas86], and even then,  only in the “galois case”, i.
%
%    The exponents of $D$ in Corollary~\ref{abelian mainthm2} improve the earlier exponents in ~\cite{M84},~\cite{M86} for non-torsion points $P \in A(\bar{K})$ with $K(P)$ is Galois over $K$.
\begin{proof}
%	Since $ \frac{1}{(D \log{D})^{2g}} \geq  \frac{1}{(D)^{2g+1}}$, for $D \gg 0$ (depending on $g$).
%	By Theorem~\ref{abelian mainthm3}, the proof of the corollary follows for all $D \geq M \gg 0$ with some $M$ (depending on $g$).
%	Since $M$ depends on $c, c^\prime, c^{\prime \prime}$, $M$ depends only on $A,K$.

For any $\epsilon \in \R^+$, we get $D^\epsilon \geq (\log{D})^{2g}$  for $D \geq M \gg 0$,
where $M \in \R^+$ depends on $g$. This implies that $\frac{1}{(D \log{D})^{2g}} \geq  \frac{1}{(D)^{2g+\epsilon}}$, for $D \geq M$. By Theorem~\ref{abelian mainthm3}, the proof of the corollary follows for all $D \geq M$.

	Now, we will prove the corollary for the remaining values of $D$ with $1 \leq D \leq M$. Let $P \in A(\bar{K})$ be a non-torsion point of degree $D \leq M$, i.e., $P$ of bounded degree $M$. By Lemma~\ref{abelian mainthm1}, there exists a constant $C$ (depending on $A$, $K$, $M$) such that $\hat{h}(P) \geq C > \frac{C}{D^{2g+\epsilon}}$. Since $M$ depends on $g=\mrm{dim}(A)$, $C$ depends only on $A$ and $K$. Hence, the proof of the corollary follows.
\end{proof}
Note that the exponent of $D$ in the lower bounds for $\hat{h}$ in Theorem~\ref{abelian mainthm3} (resp., Corollary~\ref{abelian mainthm2}) improves the known exponent $2g+1$ in ~\cite{M86} (resp., $2g+6+\frac{2}{g}$ in~\cite{M84}) for all non-torsion points $P \in A(\bar{K})$ with $K(P)$ Galois over $K$.

\section{Counting rational points on abelian varieties $A$ over $K$}
\label{section for height function on abelian variety over K}
\label{section for height function on elliptic curves over K}
\label{section for canonical height}
Throughout this section, $K$, $\mcO_K$, $\mfn$, and $\Cl_K$, denote a number field, the ring of integers of $K$, an ideal of $\mcO_K$, and the class group of $K$, respectively. Let $\mathbb{P}$ denote the set of all rational primes.
%	 and $h_0(E):= \log H_0(E)$.
%\subsection{Height functions:}
%	We now recall the logarithmic, N\'eron-Tate height functions on $E(K)$ from \cite{S86}.
%	Let $M_K$ denote the set of all absolute values on $K$. For any $P \in E(K)\setminus \{ O\}$, the multiplicative height, logarithmic height of $P$ are defined by
%	$$ H_K(P):=\prod_{v \in M_K} \max\{1, |x(P)|_v \}^{[K_v : \Q_v]},\ h_K(P):=\log H_K(P).$$ The N\'eron-Tate height is defined by $\hat{h}_K(P):= \lim_{n \rightarrow \infty} \frac{1}{4^n}h_K([2^n]P).$ For any $B \in \R^+$, let $N_{E,K}(B):= \#\{P \in  E(K): H_K(P) \leq B\}$ and $\hat{N}_{E,K}(B):= \#\{P \in  E(K): \hat{h}_K(P) \leq \log B\}$.
	
	Let $A$ (resp., $E$) be an abelian variety (resp., elliptic curve) over $K$, denoted by $A/K$ (resp., $E/K$). Let $\mcL$ be a symmetric ample line bundle on $A$.
	Let $H_K$, $h_K$, $\hat{h}_K$ be the multiplicative, logarithmic, N\'eron-Tate height on $A(K)$ associated to $\mcL$ (cf. \cite[Part B]{HS00}).
	For any $B \in \R^+$, let $N_{A,K}(B):= \#\{P \in  A(K): H_K(P) \leq B\}$ and $\hat{N}_{A,K}(B):= \#\{P \in  A(K): \hat{h}_K(P) \leq \log B\}$. 
%	For $A/K$ with conductor $\mfn$, let $H_0(A):= |\mrm{Norm}_{K/\Q}(\mfn)|^\frac{1}{4[K: \Q]}$.
	We now state the main results of this section. 	
	
		\begin{thm}
			\label{main result2 abelian variety for  over K}
			Let $A/K$ be an abelian variety with conductor $\mfn$ and $A(K)[p] \neq 0$ for some $p \in \mathbb{P}$. Let $\mcL$ be a symmetric ample line bundle on $A$. Then there exist constants $C\in \R^+$ (depending on $A$, $K$, $p$) and $C^\prime \in \R^+$ (depending on $A$, $K$, $\mcL$, $p$) such that
			\begin{equation*}
				\label{Canonical bound for N_A(B) over K}
				\hat{N}_{A,K}(B) \leq B^{ \left(\frac{C}{{\log \log B}} \right)},\ N_{A,K}(B) \leq B^{ \left(\frac{C^\prime}{{\log \log B}} \right)}
			\end{equation*}
			holds for all $B \geq e^e$.
%			for all $B \geq \max\{e^e, H_0(A)\}$.
	\end{thm}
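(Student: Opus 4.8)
The plan is to carry the Bombieri--Zannier counting scheme, in the refined form due to Naccarato, over to a general abelian variety, using the three inputs announced in the introduction: the covering lemma (Lemma~\ref{covering lemma}), the Gaudron--R\'emond lower bound for $\hat{h}_K$ on $A(K)\setminus A_\tors(K)$ (Theorem~\ref{BS lower bound on the canonical height on A(K)}), and the Ooe--Top bound on the minimal number of generators of $A(K)/mA(K)$ (\cite{OT89}). First I would reduce the Weil-height count to the canonical one: since $\hat{h}_K$ and the logarithmic height $h_K$ attached to $\mcL$ differ by a bounded amount, $|\hat{h}_K-h_K|\le c_1$ with $c_1=c_1(A,K,\mcL)$, one has $N_{A,K}(B)\le\hat{N}_{A,K}(e^{c_1}B)$, so a bound of the claimed shape for $\hat{N}_{A,K}$ transfers to $N_{A,K}$ with a constant $C'$ that inherits the dependence on $\mcL$ through $c_1$. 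Hence it suffices to bound $\hat{N}_{A,K}(B)$.

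For the core estimate, put $T:=A_\tors(K)$ (finite) and note that $\|P\|:=\hat{h}_K(P)^{1/2}$ is a seminorm on $A(K)$: it vanishes exactly on $T$, satisfies $\|nP\|=|n|\,\|P\|$ by homogeneity of the quadratic form, and obeys the triangle inequality (\cite[Theorem B.5.1]{HS00}). Let $c_0=c_0(A,K,p)>0$ be the lower bound for $\hat{h}_K$ on $A(K)\setminus T$ from Theorem~\ref{BS lower bound on the canonical height on A(K)}. Fix an integer $m\ge 2$, to be specified below in terms of $B$. Using Lemma~\ref{covering lemma} I would cover $\{P\in A(K):\hat{h}_K(P)\le\log B\}$ by the cosets of $mA(K)$ that it meets; there are at most $\#\bigl(A(K)/mA(K)\bigr)\le m^{\nu(m)}$ of them, where $\nu(m)$ is the minimal number of generators of $A(K)/mA(K)$, which the Ooe--Top bound controls explicitly in terms of $\mfn$, $K$, $g$, $p$ and $m$ --- this is where the hypothesis $A(K)[p]\neq 0$ enters (as in Naccarato's argument), making the isogeny descent behind that bound effective. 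In each coset meeting the ball, pick a representative $a$ lying in the ball, so $\hat{h}_K(a)\le\log B$; then every $P=a+mQ$ in that coset with $\hat{h}_K(P)\le\log B$ satisfies $m\|Q\|=\|P-a\|\le\|P\|+\|a\|\le 2(\log B)^{1/2}$, i.e.\ $\hat{h}_K(Q)\le 4\log B/m^2$.

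I would then take $m=p^k$ with $k$ the least integer for which $p^{2k}>4\log B/c_0$, so that $m\le 2p\sqrt{\log B/c_0}$, while $\omega(m)=1$ keeps $\nu(m)\le\nu_0$ for a constant $\nu_0=\nu_0(A,K,p)$ by the Ooe--Top bound. For this $m$ the reduced point $Q$ above must lie in $T$, so each coset meeting the ball contributes at most $\#T$ points and
\begin{equation*}
\hat{N}_{A,K}(B) \le \#T\cdot m^{\nu(m)} \le C_0\,(\log B)^{\nu_0/2},
\end{equation*}
with $C_0=C_0(A,K,p)$. Since $(\log B)^{\nu_0/2}\le B^{C/\log\log B}$ for all $B\ge e^e$ once $C=C(A,K,p)$ is taken large enough (compare $\tfrac{\nu_0}{2}\log\log B$ with $\tfrac{C\log B}{\log\log B}$, absorbing the scales $B$ near $e^e$ into $C$ using the finiteness and monotonicity of $\hat{N}_{A,K}$), this yields $\hat{N}_{A,K}(B)\le B^{C/\log\log B}$, and the first paragraph then gives $N_{A,K}(B)\le B^{C'/\log\log B}$.

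The delicate point is the interplay of the three inputs: $m$ must be simultaneously large enough (of order $\sqrt{\log B}$) to drive the reduced points into $T$ and of simple enough multiplicative shape that the Ooe--Top estimate for $\nu(m)$ does not grow with $B$; one must also verify that Lemma~\ref{covering lemma} genuinely licenses the crude coset count $m^{\nu(m)}$ (rather than the usually much smaller number of cosets actually meeting the ball) and that every constant produced depends only on $A,K,p$ --- and additionally on $\mcL$ for $N_{A,K}$. Keeping the dependence of the Ooe--Top bound on the conductor $\mfn$ transparent is the real book-keeping obstacle; the remaining ingredients --- the triangle inequality for $\sqrt{\hat{h}_K}$, the positivity of $c_0$, the finiteness of $T$ --- are soft. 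The very same scheme, with $c_0$ replaced by Petsche's explicit lower bound in terms of the Szpiro ratio, is what should make the constants effective in the elliptic-curve refinement.
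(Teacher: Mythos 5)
Your argument is correct, and its overall architecture matches the paper's (lower bound for $\hat{h}_K$ on non-torsion points $+$ bound on the rank $+$ a count polynomial in $\log B$ with exponent controlled by the rank, followed by the same transfer from $\hat{N}_{A,K}$ to $N_{A,K}$ via the bounded difference $|\hat{h}_K-h_K|$). The counting step, however, is genuinely different. The paper realizes $A(K)/A_{\tors}(K)$ as a lattice in $\R^r$ equipped with the norm $\sqrt{\hat{h}_K}$ and applies Lemma~\ref{covering lemma} literally, with $R=\sqrt{\log B}$ and $\rho=\sqrt{c_1}$ where $c_1$ is the Gaudron--R\'emond bound of Theorem~\ref{BS lower bound on the canonical height on A(K)}, obtaining $\hat{N}_{A,K}(B)\le t\bigl(1+2\sqrt{(\log B)/c_1}\bigr)^r$ directly and then bounding $r$ once and for all by the Ooe--Top rank lemma. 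You instead run a descent modulo $m=p^k\asymp\sqrt{\log B}$: at most $\#\bigl(A(K)/mA(K)\bigr)\le m^{\nu(m)}$ cosets, and in each coset the reduced point is forced into the torsion by the same lower bound $c_0$. Your invocation of Lemma~\ref{covering lemma} for the coset decomposition is a misattribution --- that lemma concerns Euclidean balls, not cosets --- but it is harmless, since the count $\#\bigl(A(K)/mA(K)\bigr)\le m^{\nu(m)}$ is elementary and $\nu(p^k)\le r+2g$ is bounded independently of $k$; so the concern you flag about whether the covering lemma ``licenses'' the coset count dissolves (you simply do not need that lemma). The two routes yield bounds of identical quality, namely $\bigl(\mathrm{const}\cdot\sqrt{\log B}\bigr)^{O(r)}$. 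What each buys: the paper's covering argument requires no choice of modulus and no hypothesis on the shape of $m$, while your descent version is purely algebraic, sidesteps the (mild) issue of how many lattice points a single ball of radius $\sqrt{c_1}$ can contain, and is the form of the argument that transports most directly to the effective elliptic-curve refinement, exactly as you indicate at the end.
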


	However, for elliptic curves $E/K$, we will show that $C,\ C^\prime$
	depend only on $E, K, p$ and they are computable. Consider $E/K$ in the Weierstrass normal form
 \begin{equation}
 	\label{eqn for elliptic curve over K}
 	E: y^2=x^3+ax+b,
 \end{equation}
 where $a,b\in K$, with the minimal discriminant $\Delta_E$. Let $H_0(E):=|\mrm{N}_{K/ \Q}(\Delta_E)|^{\frac{1}{4[K : \Q]}}$.

\begin{thm}
	\label{main result2 for elliptic curve over K}
	Let $E/K$ be an elliptic curve as in \eqref{eqn for elliptic curve over K} with conductor $\mfn \subsetneq \mcO_K$ and $E(K)[p] \neq 0$ for some  $p \in \mathbb{P}$. Then there exist \textbf{computable} constants $C$, $C^\prime \in \R^+$ (depending on $E,\ K,\ p$) such that
	\begin{equation*}
		\label{Canonical bound for N_E(B) over K}
		\hat{N}_{E,K}(B) \leq B^{ \left(\frac{C}{{\log \log B}} \right)},  N_{E,K}(B) \leq B^{ \left(\frac{C^\prime}{{\log \log B}} \right)},
	\end{equation*}
    holds for all $B \geq \max\{e^e, H_0(E)\}$. Moreover, the constants $C, C^\prime$ depend only on $K,p$, if $\#E(K)[p]=p^2$.
\end{thm}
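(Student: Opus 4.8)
The plan is to re-run the proof of Theorem~\ref{main result2 abelian variety for  over K} for $A=E$ with every constant tracked, replacing its two non-explicit inputs by effective ones. Recall the mechanism: $E(K)$ is finitely generated, so fix a splitting $E(K)\cong E_{\tors}(K)\oplus\Z^{r}$; then $\hat{h}_K$ induces a positive definite quadratic form $q$ on $\Z^{r}$, and $\hat{N}_{E,K}(B)=\#E_{\tors}(K)\cdot\#\{v\in\Z^{r}:q(v)\le\log B\}$. Thus the task is to count the points of a rank-$r$ positive definite quadratic form inside a Euclidean ball of radius $\sqrt{\log B}$, and the covering lemma (Lemma~\ref{covering lemma}) bounds such a count in terms of only two data: the shortest non-zero length $\sqrt{\lambda_{1}}$, where $\lambda_{1}=\min_{v\ne 0}q(v)$ is the minimum of $\hat{h}_K$ over non-torsion points of $E(K)$, and the rank $r$. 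Since $r$ is not effectively computable, we will use instead any computable upper bound for the minimal number of generators of $E(K)/pE(K)$, which is at least $r$.

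First I would make both inputs explicit. For the lower bound, in place of the Gaudron--R\'emond estimate used in the general case I invoke Petsche's bound (cf. \cite[Theorem 2]{P06}): every non-torsion $P\in E(K)$ satisfies $\hat{h}_K(P)\ge c_{0}$, where $c_{0}=c_{0}(E,K)>0$ is given by an explicit formula in the Szpiro ratio of $E$ (here the hypothesis $\mfn\subsetneq\mcO_K$ is used, so that the Szpiro ratio is defined) and in $[K:\Q]$; this $c_{0}$ is computable. For the rank, Ooe--Top's \cite[Theorem 2]{OT89} gives a computable upper bound $\rho=\rho(E,K,p)$ for the minimal number of generators of $E(K)/pE(K)$, expressed through $\mfn$, the primes of $K$ above $p$, $\Cl_K$, and the reduction data of $E$; thus $r\le\rho$. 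Substituting into the covering lemma yields $\hat{N}_{E,K}(B)\le\#E_{\tors}(K)\,\bigl(1+2\sqrt{\log B/c_{0}}\bigr)^{\rho}$, and an elementary estimate shows that, for $B\ge e^{e}$ and the constant taken large enough in terms of $c_{0}$ and $\rho$, the right-hand side is at most $B^{C/\log\log B}$ with $C$ an explicit function of $c_{0}$, $\rho$ and $\#E_{\tors}(K)$, hence of $E,K,p$.

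Next I would pass from $\hat{h}_K$ to $H_K$. Writing $h_K$ for the logarithmic height attached to $\mcL$, one has $H_K(P)\le B\iff h_K(P)\le\log B$ together with $|h_K(P)-\hat{h}_K(P)|\le\kappa$, where for the Weierstrass model \eqref{eqn for elliptic curve over K} the difference constant $\kappa$ is completely explicit and controlled by $\log H_0(E)$ (the classical comparison estimate, whose size is governed by the height of $\Delta_E$). Hence $N_{E,K}(B)\le\hat{N}_{E,K}(e^{\kappa}B)$, and for $B\ge\max\{e^{e},H_0(E)\}$ --- exactly the range in which $\log B$ absorbs $\kappa$ --- the shift by $e^{\kappa}$ is swallowed into a new computable exponent $C'=C'(E,K,\mcL,p)$; since for this model $\kappa$ depends only on $\Delta_E$, $C'$ in fact depends only on $E,K,p$. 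Finally, for the last assertion: if $\#E(K)[p]=p^{2}$ then the Weil pairing forces $\mu_{p}\subseteq K$, so $p-1\mid[K:\Q]$ and $p$ is bounded in terms of $K$; combining this with a uniform bound for $\#E_{\tors}(K)$ in terms of $[K:\Q]$ and with the form taken by the Ooe--Top and Petsche estimates in the presence of full rational $p$-torsion, one arranges $c_{0}$, $\rho$ and $\#E_{\tors}(K)$ --- and therefore $C$ and $C'$ --- to depend on $K$ and $p$ alone.

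The argument is in essence bookkeeping, so the main obstacle is effectivity, in two places. First, extracting from \cite[Theorem 2]{OT89} a genuinely closed-form, computable bound $\rho$ for the number of generators of $E(K)/pE(K)$, and checking that when $\#E(K)[p]=p^{2}$ it may be taken to depend only on $K$ and $p$. Second, assembling the several explicit constants --- $c_{0}$ from Petsche, $\rho$ from Ooe--Top, the torsion bound, the covering-lemma constant, and the comparison constant $\kappa$ --- so that $C$ and $C'$ come out as explicit closed-form expressions and the threshold on $B$ is precisely $\max\{e^{e},H_0(E)\}$.
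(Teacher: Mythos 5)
Your first two paragraphs correctly reproduce the easier half of the theorem and follow the paper's route: Petsche's explicit lower bound for $\hat{h}_K$ on non-torsion points, the Ooe--Top bound for the rank, and the covering lemma do give computable constants $C,C'$ depending on $E$, $K$, $p$, and your passage from $\hat{h}_K$ to $H_K$ via the explicit comparison constant is exactly the paper's argument. The gap is in the ``Moreover'' clause, which is where the real content of this theorem (as opposed to Theorem~\ref{main result2 abelian variety for  over K}) lies. You assert that when $\#E(K)[p]=p^2$ the Ooe--Top bound $\rho$ and Petsche's constant $c_0$ ``take a form'' depending only on $K$ and $p$. That is not true: full rational $p$-torsion only forces $L=K(E[p])=K$, removing the dependence on $[L:\Q]$ and $\varrho(\Cl_L[p])$, but the Ooe--Top bound still contains $\omega_1(\mfn)$, the number of primes of bad reduction, and Petsche's constant still contains the Szpiro ratio $\sigma_{E/K}$ and $\log|\mrm{N}_{K/\Q}(\Delta_E)|$; none of these is bounded in terms of $K$ and $p$ alone as $E$ varies. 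So substituting these estimates verbatim into the covering lemma cannot yield constants independent of $E$.

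The missing idea is the trade-off between rank and conductor that the paper runs (following Naccarato), and it is precisely here that the hypothesis $B\ge H_0(E)$ is used in an essential way --- in your write-up it only serves to absorb the height-comparison constant $\kappa$. First, $B\ge H_0(E)$ combined with Robin's bound on the number of prime divisors converts the Ooe--Top estimate into $r\le c_2\log B/\log\log B$ with $c_2$ independent of $E$ in the full-torsion case (Lemma~\ref{lem for the bound of rank of E}); you never bound $r$ in terms of $B$ at all. Then one splits into two cases. If $r$ is below an explicit threshold $\alpha_L$ depending only on $K,p$, the naive substitution works. If $r\ge\alpha_L$, the Ooe--Top inequality forces $\omega(\mrm{N}_{K/\Q}(\mfn))\ge r_1$ with $r_1$ comparable to $r$, hence $\lambda(\mfn)\ge r_1\log r_1$, hence $\sigma_{E/K}\le 4[K:\Q]\log B/(r_1\log r_1)$; feeding this back into the covering bound gives $\hat{N}_{E,K}(B)\le t\left(c_9\log B/(r_1\log r_1)\right)^{c_{10}r_1}$, and one must then maximize $\phi(x)=\left(\gamma/(x\log x)\right)^{x}$ over $x\in[13,\alpha(B)]$ to land on $B^{C_2/\log\log B}$ with $C_2$ free of $E$. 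Without this case analysis and optimization your argument establishes only the first assertion of the theorem, not the ``Moreover''. (A smaller point: the factor $\#E_{\tors}(K)$ also enters the constants, so the uniform statement additionally needs a torsion bound depending on $K$ alone.)
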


\begin{remark}
	Note that, in Theorem~\ref{main result2 for elliptic curve over K}, the assumption $\mfn \subsetneq \mcO_K$ is required to get at least one place of bad reduction for the elliptic curve $E/K$. This is needed in the proof of Lemma~\ref{bound for the rank of A/K terms of B} for the explicit upper bound of $\mrm{rank} (E(K))$ and in the proof of Theorem~\ref{main result2 for elliptic curve over K} to get the computable constants $C, C^\prime$. 
\end{remark}

We shall give proofs of Theorems~\ref{main result2 abelian variety for  over K},~\ref{main result2 for elliptic curve over K} in \S\ref{proof of main results for abelian and elliptic}. To prove these theorems, we need certain estimates, which are the content of the next three subsections.

\subsection{Bound on the rank of $A$ over $K$}
By Mordell-Weil theorem, the abelian group $A(K)$ is finitely generated.
%Since $A_\tors(K)$  is a finite abelian group and  {\color{red} generated by $2g$ elements},
%there exists integers $r_1, r_2,\dots,r_{2g}\geq1$ with $r_i | r_j$ for all $i <j$,  such that $A_{tors}(K) \simeq \Z / r_1\Z \oplus  \Z / r_2\Z \oplus \dots  \Z / r_{2g}\Z $.
% 	In , Ooe and Top gave an upper bound for the rank of $A(K)/mA(K)$, which will be useful for finding the rank of $A(K)$. More precisely;
	% 	\begin{prop}
% 		\label{OT1 upper bound for the rank of abelian variety}
% 		Let $A$ be an abelian variety over $K$ of dimension $g$, and let $m \geq2$ be an integer. Let $S:=\{v \in M_K^0 : A \text{ has bad reduction at } v\}\bigcup \{v \in M_K^0 : v(m) \neq 0\} \bigcup M_K^\infty $. If $A$ has full $m$-torsion over $K$ i.e., $A[m] \subseteq A(K)$, then $$\mrm{rank}_{\Z/m\Z} (A(K)/mA(K)) \leq 2g (\#S + \mrm{rank}_{\Z/m\Z}(\Cl_K[m])).$$
% 	\end{prop}
 For any ideal $\mfn \subseteq \mcO_K$, let $\omega_1(\mfn)$ denote the number of distinct prime ideals of $\mcO_K $ that occur in the prime factorization of $\mfn$. For any finite abelian group $G$, let $\varrho(G)$ denote the minimal number of generators of $G$.
 In the following lemma, we will find an upper bound of the rank of $A(K)$.
	%\begin{lem}
	%	\label{bound for the rank of abelian variety  over number field K}
	%	Let $A/K$ be an abelian variety of conductor $\mfn$, having full $m$-torsion for some $m >1$. Then $\mrm{rank} (A(K)) \leq { \color{red}2 g\omega_1(m)}+ 2g \omega_1(\mfn)+  2g \mrm{rank}_{\Z/m\Z} (\Cl_K[m])-2g$.
	%\end{lem}
	\begin{lem}
		\label{bound for the rank of abelian variety  over number field K}
		\label{bound for the rank of abelian variety having a non-trivial $m$-torsion}
		\label{bound for the rank of E having a non-trivial $m$-torsion}
		Let $A, \mfn, p$ be as in Theorem~\ref{main result2 abelian variety for  over K} and $L=K(A[p]).$ Then
		$$\mrm{rank} (A(K)) \leq 2g \left( 2[L: \Q] + [L: K] \omega_1(\mfn)+  \varrho (\Cl_L[p]) \right).$$
	\end{lem}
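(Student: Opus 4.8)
The plan is to bound $\mathrm{rank}(A(K))$ via a Selmer-type argument applied to the multiplication-by-$p$ map. First I would use the Mordell--Weil theorem to write $A(K) \cong \Z^r \oplus A_\tors(K)$, so that $A(K)/pA(K)$ has $\F_p$-dimension $r + \dim_{\F_p} A(K)[p] \geq r$; hence it suffices to bound $\dim_{\F_p} A(K)/pA(K)$, or equivalently the minimal number of generators $\varrho\big(A(K)/pA(K)\big)$. The standard device is the exact descent sequence
\begin{equation*}
0 \lra A(K)/pA(K) \lra \mathrm{Sel}^{(p)}(A/K) \lra \Sha(A/K)[p] \lra 0,
\end{equation*}
so I would instead bound the $\F_p$-dimension of the $p$-Selmer group $\mathrm{Sel}^{(p)}(A/K)$, which is cut out inside $\rH^1(K, A[p])$ by local conditions. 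Concretely, $\mathrm{Sel}^{(p)}(A/K)$ injects into the subgroup of $\rH^1(K, A[p])$ consisting of classes unramified outside the set $S$ of places dividing $p \infty \mfn$ (bad reduction plus the primes above $p$ and the archimedean places), since away from $S$ the curve has good reduction and the local connecting maps land in the unramified cohomology.

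Next I would restrict to the field $L = K(A[p])$, over which $A[p] \cong (\Z/p\Z)^{2g}$ becomes a trivial Galois module, and use inflation--restriction to compare $\rH^1(K, A[p])$ with $\rH^1(L, A[p])^{\Gal(L/K)} = \Hom(\Gal(\bar K/L), \F_p^{2g})^{\Gal(L/K)}$. The inflation map has kernel $\rH^1(\Gal(L/K), A[p])$, but for counting purposes the cleaner route (following Ooe--Top, \cite[Theorem 2]{OT89}, which the introduction flags as the source) is: the image of $\mathrm{Sel}^{(p)}(A/K)$ under restriction lands in the group of homomorphisms $\Gal(\bar L/L) \to \F_p^{2g}$ that are unramified outside the places of $L$ above $S$. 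Such a homomorphism factors through the maximal abelian exponent-$p$ extension of $L$ unramified outside $S_L := \{ \text{places of } L \text{ over } S\}$, whose Galois group, by class field theory, has $\F_p$-rank at most $\varrho(\Cl_L[p]) + \# S_L^{\mathrm{fin}} + r_1(L) + r_2(L)$ (the $S$-unit contribution plus the $p$-class group), where one uses that the $S$-class group sits in an exact sequence with $\Cl_L[p]$ and a free part of rank $\#S_L^{\mathrm{fin}}$, and the global units contribute at most $r_1(L)+r_2(L) \leq [L:\Q]$. Tensoring up by the $2g$ copies of $\F_p$ and absorbing constants, $\# S_L^{\mathrm{fin}} \leq [L:K]\,\omega_1(\mfn) + [L:K]\,\omega_1(p\mcO_K) \leq [L:K]\omega_1(\mfn) + [L:\Q]$ and $[L:\Q] \geq r_1(L)+r_2(L)$, so everything collects into $2g\big( 2[L:\Q] + [L:K]\omega_1(\mfn) + \varrho(\Cl_L[p]) \big)$, which is the claimed bound.

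I expect the main obstacle to be bookkeeping rather than conceptual: getting the constant $2$ in front of $[L:\Q]$ exactly right, i.e. correctly accounting for (i) the global unit group $\mcO_L^\times / (\mcO_L^\times)^p$, which has rank $r_1(L)+r_2(L) - 1 + \dim_{\F_p}\mu_p(L) \leq [L:\Q]$, (ii) the primes of $L$ above $p$, numbering at most $[L:K]\omega_1(p\mcO_K) \leq [L:\Q]$ (crudely), and (iii) the primes above $\mfn$, numbering at most $[L:K]\omega_1(\mfn)$; plus making sure the passage from the Selmer group over $K$ to unramified-outside-$S_L$ homomorphisms over $L$ does not lose track of a factor (the kernel $\rH^1(\Gal(L/K),A[p])$ is harmless since it only helps, but one must phrase the restriction-then-count argument so that the $\Gal(L/K)$-invariance is simply dropped as an inequality). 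I would also need to be slightly careful that "unramified outside $S$" is the correct local condition at $p$: the local image of $A(K_v)/pA(K_v)$ in $\rH^1(K_v, A[p])$ for $v \mid p$ is generally \emph{not} contained in the unramified subgroup, so I should replace the naive unramified condition at $v\mid p$ by the full $\rH^1(K_v, A[p])$ and note that the corresponding global classes are "unramified outside $S$" precisely because $p \mid S$ — this is exactly why the primes above $p$ appear in the $[L:\Q]$ term. Once these local/global accountings are pinned down, the estimate follows by the dimension count above.
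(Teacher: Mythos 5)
Your proposal follows the same route as the paper: the paper's proof is simply to note $\mathrm{rank}(A(K))\le\mathrm{rank}(A(L))$ for $L=K(A[p])$ (where $A$ acquires full $p$-torsion) and then to quote Ooe--Top \cite[Theorem 2]{OT89} for $A/L$ with $S$ the union of the places of bad reduction, the places over $p$, and the archimedean places, whose cardinalities are bounded by $[L:K]\omega_1(\mfn)$, $[L:\Q]$ and $[L:\Q]$ respectively. Your Selmer-group and class-field-theory count is exactly the content of that cited theorem unwound, and your final bookkeeping reproduces the stated constant.

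One correction, though: the kernel of restriction $\rH^1(K,A[p])\to\rH^1(L,A[p])$ does \emph{not} ``only help.'' If you bound only the image of $\mathrm{Sel}^{(p)}(A/K)$ under restriction, you lose the classes killed by restriction, so your inequality on $\dim_{\F_p}\mathrm{Sel}^{(p)}(A/K)$ is off by up to $\dim_{\F_p}\rH^1(\Gal(L/K),A[p])$, which is not accounted for in the claimed constant. The clean repair is precisely the paper's opening move: do the descent entirely over $L$, i.e. bound $\varrho\bigl(A(L)/pA(L)\bigr)$ via the injection of $A(L)/pA(L)$ into the homomorphisms $\Gal(\bar L/L)\to\F_p^{2g}$ unramified outside $S_L$, and only then use $\mathrm{rank}(A(K))\le\mathrm{rank}(A(L))$. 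With that substitution (and no inflation--restriction step at all), your count gives the lemma as stated.
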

	
	\begin{proof}
		By Mordell-Weil theorem, $\mrm{rank} (A(L)) \leq \varrho (A(L)/pA(L)).$
		By the definition of $L$, $A$ has full $p$-torsion over $L$.
		Now applying~\cite[Theorem 2]{OT89} to $A/L$ with 
		$$S=\{v \in M_L^0 : A \text{ has bad reduction at } v\} \bigcup \{v \in M_L^0 : v(p) \neq 0\} \bigcup  M_L^\infty,$$
		 where  $M_L^\infty$ (resp.,  $M_L^0$) denote the archimedean (resp., non-archimedean) absolute values on $L$,
%		 Then  $\# \{v \in M_L^0 : A \text{ has bad reduction at } v\} \leq [L: K] \omega_1(\mfn)$, $\#  M_L^\infty \leq [L:\Q]$, and $\#\{v \in M_L^0 : v(p) \neq 0\} \leq [L:\Q]$. 
		the proof of the lemma follows since $\mrm{rank}(A(K)) \leq \mrm{rank}(A(L))$, $\# \{v \in M_L^0 : A \text{ has bad reduction at } v\} \leq [L: K] \omega_1(\mfn)$,  $\#\{v \in M_L^0 : v(p) \neq 0\} \leq [L: \Q]$, and $\#  M_L^\infty \leq [L: \Q]$.
	\end{proof}
	
	%The following lemma gives an upper bound for the rank of the elliptic curve $A/K$ having a non-trivial $m$-torsion point for some $m >1$.
	%%\section{abelian varieties}
	%
	%\begin{lem}
	%	\label{bound for the rank of abelian variety having a non-trivial $m$-torsion}
	%	Let $A/K$ be an abelian variety of conductor $\mfn$, having a non-trivial $m$-torsion point, for some $m >1$. Then $\mrm{rank} (A(K)) \leq g[L: K] \omega_1(m)+ g[L: K] \omega_1(\Delta_E)+  g \mrm{rank}_{\Z/m\Z} (\Cl_L[m])-2g$, where $L$ is the $m$-division field of the elliptic curve $A/K$.
	%\end{lem}
	%
	%\begin{proof}
	%	Let $L/K$ be the $m$-division field of the abelian variety $A/K$, i,e., the field generated by all the $m$-torsion points of $A/K$ and $K$. Then $A$ has full $m$-torsion over $L$. Now, applying Lemma~\ref{bound for the rank of abelian variety  over number field K} to the abelian variety $A$ over $L$, we have $\mrm{rank} (A(L)) \leq { \color{red}g[L: K] \omega_1(m)}+ g[L: K] \omega_1(\Delta_E)+  g \mrm{rank}_{\Z/m\Z} (\Cl_L[m])$.
	%\end{proof}	
	
	\subsection{Lower bound for $\hat{h}_K$ on $A/K$}
	For any abelian variety $A/K$, let $h_F(A)$ be the stable Falting height of $A$ (cf. \cite[\S1]{GR22}). In~\cite[Th\'eor\`eme 1.3]{GR22}, Gaudron and R\'emond gave an explicit lower bound for $\hat{h}_K$ on $A(K) \setminus A_{\tors}(K)$.
	 \begin{thm}
		\label{BS lower bound on the canonical height on A(K)}
		Let $A/K$ be an abelian variety of dimension $g$ and let $n =  [K: \Q] $. Then 
		$\hat{h}_K(P) \geq  {n \bigl( (6g)^8n \max\{1, h_F(A), \log n\} \bigr)^{-2g}}$ for all $ P \in A(K) \setminus A_{\tors}(K)$.
	\end{thm}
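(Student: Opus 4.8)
Since $\hat h_K$ is a positive definite quadratic form on $A(K)/A_{\tors}(K)$ (cf.~\cite[Theorem B.5.3]{HS00}), the statement concerns only non-torsion $P$ and asserts an explicit Bogomolov-type lower bound over the \emph{fixed} field $K$; the leading factor $n$ merely reflects the normalization of $\hat h_K$ relative to $K$, so the content is a bound of the form $\hat h_K(P)\ge n\bigl((6g)^8 n\max\{1,h_F(A),\log n\}\bigr)^{-2g}$ with all constants displayed in terms of $g$, $n$ and $h_F(A)$. The plan is to run the classical auxiliary-function/transcendence machinery, made effective. Embed $A$ in projective space via a suitable power $\mcL^{\otimes k}$, together with an analytic uniformization $\C^g/\Omega\simeq A(\C)$; the decisive point is that the size of a reduced period matrix $\Omega$, the diameter of a fundamental domain, and the sup-norms and derivative bounds of the theta functions defining the embedding are all controlled by $\max\{1,h_F(A),\log n\}$ through the period estimates of Bost and of Gaudron--R\'emond, while the Weil height attached to the embedding differs from $\hat h_K$ by an error that is likewise explicit in $h_F(A)$.

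The steps, in order: (1) assume $\hat h_K(P)$ is below the claimed value, so the multiples $0,P,2P,\dots,(S-1)P$ have Weil heights $\ll k\,m^2\hat h_K(P)$ up to the $h_F(A)$-controlled error, hence small and fully explicit for a parameter $S$ still to be chosen; (2) by Siegel's lemma, build a nonzero global section of a bounded power of $\mcL$, of controlled degree and arithmetic height, vanishing to a prescribed order along $0,P,\dots,(S-1)P$, feasibility being a linear-algebra count against the smallness from (1); (3) apply an explicit zero estimate---in the spirit of Philippon and of Masser--W\"ustholz, with constants polynomial in $g$---on $A^S$ to locate a multiple $mP$ and a derivative at which the section does not vanish; (4) its value lies in $K$ and is nonzero, so Liouville's inequality bounds it from below in terms of $n$ and the arithmetic height of the section, while a Schwarz-lemma / maximum-modulus estimate on $\C^g/\Omega$, whose analytic input sizes are precisely the $h_F(A)$-controlled quantities above and are made small by the smallness of $\hat h_K(P)$, bounds the same value from above; (5) choosing $k$, $S$ and the vanishing order optimally, the two bounds collide, and the exponent $2g$ together with the constant $(6g)^8$ drops out of this optimization.

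The main obstacle is to supply the transcendence input with the correct explicit constants: the comparison between the Weil height of an $\mcL^{\otimes k}$-embedding and $\hat h_K$ with the error governed by $h_F(A)$; the control of all archimedean data---period sizes, fundamental-domain diameters, theta sup-norms and their derivatives---by $\max\{1,h_F(A),\log n\}$ with absolute constants; and an explicit zero estimate on $A^S$ with $g$-polynomial constants. Once these are in place, the parameter optimization in step (5) is routine bookkeeping. As a sanity check one may also record a cruder elementary alternative: if $\hat h_K(P)$ were too small, the set $\{mP+T:1\le m\le N,\ T\in A_{\tors}(K)\}$ would have $N\cdot\#A_{\tors}(K)$ pairwise distinct elements (as in the proof of Theorem~\ref{abelian mainthm3}), each of small height, so taking $N$ maximal subject to the relevant threshold confines this set to the small points counted by the explicit form of Theorem~\ref{Mas bound for points of small height} (\cite[Th\'eor\`eme 1.1]{GR22}); comparing cardinalities and bounding $\#A_{\tors}(K)$ explicitly in $n$ and $h_F(A)$ yields an explicit bound of the same shape, but because the counting constant itself already depends on $h_F(A)$ and here gets squared, this route falls short of the sharp exponent $2g$.
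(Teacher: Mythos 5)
This statement is not proved in the paper at all: it is Th\'eor\`eme 1.3 of Gaudron--R\'emond \cite{GR22}, quoted verbatim as an external input, so there is no internal argument to compare yours against. Judged on its own terms, your proposal is a strategy outline rather than a proof. Every quantitatively decisive ingredient --- the explicit zero estimate on $A^S$ with constants polynomial in $g$, the control of periods, fundamental-domain diameters and theta sup-norms by $\max\{1,h_F(A),\log n\}$, the explicit comparison of the $\mcL^{\otimes k}$-Weil height with $\hat h_K$, and the final optimization that is supposed to produce exactly the factor $(6g)^8$ and the exponent $-2g$ --- is listed by you as ``the main obstacle'' and left unsupplied. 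Since the whole point of the statement is the explicit constant, a plan that defers precisely the explicit constants does not establish it; one cannot verify that your step (5) ``collides'' to give $n\bigl((6g)^8 n\max\{1,h_F(A),\log n\}\bigr)^{-2g}$ rather than some other expression.

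Ironically, the route you dismiss as a ``cruder sanity check'' is much closer to how this bound is actually obtained. Gaudron--R\'emond deduce their Th\'eor\`eme 1.3 from their explicit count of points of small N\'eron--Tate height (their Th\'eor\`eme 1.1, i.e.\ the explicit form of Theorem~\ref{Mas bound for points of small height}): if $\hat h_K(P)$ were below the threshold, the multiples $P,2P,\dots,NP$ of a non-torsion $P\in A(K)$ are $N$ distinct points of height at most $N^2\hat h_K(P)$, and comparing $N$ with the explicit cardinality bound for small points forces the stated lower bound; the exponent $-2g$ arises exactly from the $N^2$ squaring against a count of size roughly $\bigl((6g)^8 n\max\{1,h_F(A),\log n\}\bigr)^{g}$. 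Your claim that this route ``falls short of the sharp exponent $2g$'' is therefore mistaken, and the heavy transcendence machinery of your main plan is not needed once the counting theorem is granted. If you want a self-contained derivation to include here, write out that deduction carefully from \cite[Th\'eor\`eme 1.1]{GR22}; as it stands, the correct move in this paper is simply to cite \cite[Th\'eor\`eme 1.3]{GR22}, as the authors do.
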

  However, for elliptic curves $E/K$, there is a conjecture of Lang (cf. \cite[page 92]{L78}) for explicit lower bound for $\hat{h}_K$ on $E(K) \setminus E_{\tors}(K)$.
%  , where the constant $c_1$ was more explicit.
	\begin{conj}[Lang]
		\label{Lang's conjecture for lower bound for height}
		Let $E/K$ be an elliptic curve with the  minimal discriminant $\Delta_{E}$. Then there exists a constant $c_K \in \R^+$ (depending on $K$) such that 
		\begin{equation*}
			\hat{h}_K (P) \geq c_K \log |\mrm{N}_{K/ \Q}(\Delta_{E})|
		\end{equation*}
		for all $P \in E(K) \setminus E_{\tors}(K)$.
	\end{conj}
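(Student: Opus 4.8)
The plan is to deduce this inequality from the Szpiro conjecture over $K$ — equivalently, from the $abc$ conjecture over $K$ — by feeding it into the unconditional Hindry--Silverman--Petsche lower bound; I describe that implication and flag the Szpiro conjecture as the point at which the argument stops being unconditional. Write $\mfn$ for the conductor of $E$, and note that if $E$ has everywhere good reduction then $|\mrm{N}_{K/\Q}(\Delta_E)| = 1$ and the claim is vacuous, so we may assume $E$ has a place of bad reduction and set $\sigma_E := \log|\mrm{N}_{K/\Q}(\Delta_E)| / \log|\mrm{N}_{K/\Q}(\mfn)| \in \R^+$, the Szpiro ratio of $E/K$.

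The unconditional input is Petsche's theorem \cite[Theorem 2]{P06}, refining \cite[Theorem 0.3]{HS88}: there exist a constant $c(K) \in \R^+$ depending only on $K$ and a fixed positive, decreasing function $\psi$ on $\R^+$ (one may take $\psi(\sigma) = \sigma^{-2}$ in Petsche's version) such that
\[
  \hat{h}_K(P) \;\geq\; c(K)\,\psi(\sigma_E)\,\log|\mrm{N}_{K/\Q}(\Delta_E)|
  \qquad\text{for every } P \in E(K)\setminus E_{\tors}(K).
\]
This bound comes from decomposing $\hat{h}_K$ into a sum of canonical local heights, controlling the non-archimedean terms via Tate's algorithm at the finitely many bad places (where the local height is governed by the valuation of $\Delta_E$ and the component group of the N\'eron model), and estimating the archimedean terms through classical expansions of the Weierstrass functions.

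To pass from here to Lang's conjecture, invoke the Szpiro conjecture over $K$: there is a constant $\kappa = \kappa(K) \in \R^+$ with $\sigma_E \leq \kappa(K)$ for \emph{every} elliptic curve $E/K$ (one may take $\kappa(K) = 6 + \epsilon$ for any fixed $\epsilon > 0$, up to a field-dependent implied constant, since Szpiro over $K$ is equivalent to $abc$ over $K$). Because $\psi$ is positive and decreasing, $\psi(\sigma_E) \geq \psi(\kappa(K)) > 0$, and the displayed inequality becomes $\hat{h}_K(P) \geq \big(c(K)\,\psi(\kappa(K))\big)\log|\mrm{N}_{K/\Q}(\Delta_E)|$ for all non-torsion $P \in E(K)$. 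Setting $c_K := c(K)\,\psi(\kappa(K))$, which depends only on $K$, gives exactly the conclusion of the conjecture.

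The main obstacle is precisely the last step: the entire content of Petsche's bound is a theorem, so the difficulty is concentrated in uniformly bounding $\sigma_E$. Removing the $\psi(\sigma_E)$ factor is not accessible by present methods — it amounts to a weak form of the $abc$ conjecture over $K$ — so the argument above is a proof of Lang's conjecture conditional exactly on Szpiro. Without that hypothesis one is left with the displayed bound, which still carries the $\psi(\sigma_E)$ factor; that unconditional but weaker inequality is the one actually used in Theorem~\ref{main result2 for elliptic curve over K}.
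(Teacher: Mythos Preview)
The paper does not prove this statement at all: it is recorded as Lang's \emph{conjecture}, still open, and the paper only cites the partial unconditional results of Hindry--Silverman and Petsche in which the constant depends also on the Szpiro ratio $\sigma_{E/K}$. So there is no ``paper's own proof'' to compare against.

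Your argument is the standard (and correct) observation, already implicit in \cite{HS88}, that Szpiro over $K$ implies Lang over $K$: plug a uniform bound $\sigma_{E/K}\le \kappa(K)$ into any Hindry--Silverman/Petsche-type inequality and the $\sigma$-dependence is absorbed into a constant depending only on $K$. You flag the conditional step honestly, so there is no hidden gap; the ``proof'' is exactly as conditional as you say. One small inaccuracy: in Petsche's bound the decreasing factor is not $\psi(\sigma)=\sigma^{-2}$ but essentially $\psi(\sigma)=\bigl(\sigma^{6}\log^{2}(c_4\sigma^{2})\bigr)^{-1}$ (see the explicit constant quoted in the paper right after the conjecture); this does not affect your logic, only the numerical shape of the resulting $c_K$.
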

%Let $\mathcal{F}_{E/K}$ denote the conductor of $E/K$. Define the Szpiro of $E/K$ by 
% $$\delta_{E/K}:=\frac{\log |N_{K/ \Q}(\Delta_{E})|}{\log |N_{K/ \Q}(\mathcal{F}_{E/K})|},$$
% when $E/K$ has atleaast one place of bad reduction. If $E/K$ has everywhere good reduction, then we define $\delta_{E/K}:=1$. By \cite{P06}, we have $\delta_{E/K} \geq 1$.
	In~\cite[Theorem 0.3]{HS88}, Hindry and Silverman proved a version of Conjecture~\ref{Lang's conjecture for lower bound for height}  with  $c_K$ depends exponentially on $[K: \Q]$ and Szpiro ratio $\sigma_{E/K}$ of $E$. In~\cite[Theorem 2]{P06}, Petsche proved that $c_K= \Big({10^{15} [K:\Q]^3 \sigma_{E/K}^6 \log^2 \left(104613 [K:\Q] \sigma_{E/K}^2 \right)} \Big)^{-1}$, which depends polynomially on $[K: \Q]$ and $\sigma_{E/K}$.

\subsection{Counting rational points through a covering argument in $\R^n$}
\label{covering argument of abelian variety over K}
By Mordell-Weil Theorem, $A(K) \simeq \Z^r \oplus A_{\tors}(K)$, where $r = \mrm{rank}(A(K))$. By taking the tensor product with $\R$, we get $A(K) \otimes \R \simeq \R^r$. Clearly, $A(K)/A_{\tors}(K)$ injects into $ A(K) \otimes \R$ via
$x \ra x \otimes 1$. In particular, $A(K)/A_{\tors}(K)$ sits inside $\R^r$ as a lattice $L_A$ of dimension $r$ via this injection (cf. \cite[page 3]{N21} for more details).

%Recall that, the N\'eron-Tate height $\hat{h}_K : A(K) \rightarrow [0, \infty)$ is a positive definite quadratic form on $A(K) \setminus A_{\tors}(K)$.
% (cf. \cite[Theorem B.5.3]{HS00}).
Recall that, the  N\'eron-Tate height $\hat{h}_K : A(K) \rightarrow [0, \infty)$ is a quadratic form, and positive definite on $A(K) / A_{\tors}(K)$. 
Define a norm on $\R^r$ by $|x|:= \sqrt{\hat{h}_K(P)}$ if $x=P$ for some $P \in A(K)/A_{\tors}(K)$ and this can be extended by continuity and linearly to $\R^r$ (cf. \cite[page 4]{BZ04} for more details). Since  $A(K)/ A_{\tors}(K)$ is a lattice $L_A \subseteq \R^r$ of dimension $r$, counting the points $P \in  A(K)/ A_{\tors}(K)$ with $\hat{h}_K(P) \leq \log B$ is same as counting the lattice points of $L_A$ in the ball of radius $\sqrt{\log B}$ centred at the origin in $\R^r$. To count the lattice points, we need the following covering lemma (cf. \cite[Lemma 4.1]{N21}).

% The following key steps are as follows:
% \begin{itemize}
% 	\item First we find a small enough radius $\rho_0$ such that any ball of radius of $\rho_0$ centred at a point of $L_A$ does not contain any other points of $L_A$.
% 	\item Next, we count total number of these balls to cover the intersection of $L_A$ with the ball of radius $\sqrt{\log B}$ centred at $0$.
% \end{itemize}

\begin{lem}
	\label{covering lemma}
	For a positive integer $n$, radii $R$, $\rho$ and a subset $S$ of $n$-ball $B_n(0,R)$, there exists a set of at most $(1+ \frac{2R}{\rho})^n$ balls of radius $\rho$ centred at points of $S$ such that $S$ is contained in the unions of these balls.
\end{lem}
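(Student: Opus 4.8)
The plan is to use a greedy maximal-packing argument in $\R^n$ together with an elementary volume comparison. First I would construct the centres of the covering balls. Among all subsets of $S$ that are $\rho$-separated (i.e.\ any two distinct points are at distance $\geq \rho$), pick one, say $\{x_1, \dots, x_N\}$, of maximal cardinality; such a subset exists because $S \subseteq B_n(0,R)$ is bounded, so every $\rho$-separated subset of $S$ is finite (a fact that also drops out of the volume bound below). By maximality, every $y \in S$ satisfies $|y - x_i| < \rho$ for some $i$: otherwise $\{x_1, \dots, x_N, y\}$ would be a strictly larger $\rho$-separated subset of $S$. Hence the $N$ balls $B_n(x_i, \rho)$ cover $S$ and are centred at points of $S$, as required.

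Second, I would bound $N$ by volume. Since the $x_i$ are pairwise at distance $\geq \rho$, the balls $B_n(x_i, \rho/2)$ are pairwise disjoint, and since each $x_i \in S \subseteq B_n(0,R)$, each of these balls is contained in $B_n(0, R + \rho/2)$. Writing $v_n$ for the volume of the unit $n$-ball, disjointness and containment give
\[
N \, v_n (\rho/2)^n \;=\; \sum_{i=1}^{N} \mathrm{vol}\, B_n(x_i, \rho/2) \;\leq\; \mathrm{vol}\, B_n\!\left(0, R + \tfrac{\rho}{2}\right) \;=\; v_n \left(R + \tfrac{\rho}{2}\right)^n,
\]
whence $N \leq \left( \dfrac{R + \rho/2}{\rho/2} \right)^n = \left( 1 + \dfrac{2R}{\rho} \right)^n$, which is exactly the claimed bound.

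The argument is entirely elementary, so there is no genuine obstacle; the only points deserving a word of care are the existence of a maximal $\rho$-separated subset (guaranteed by boundedness of $S$, and a posteriori by the cardinality bound just proved) and the trivial degenerate cases $S = \emptyset$ or $R = 0$, handled by the empty cover or a single ball respectively. I would also remark that the covering balls themselves need not be disjoint — only the auxiliary balls of radius $\rho/2$ are — and that the covering property is obtained ``for free'' from maximality of the packing rather than requiring a separate construction.
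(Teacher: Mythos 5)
Your proof is correct and is the standard maximal $\rho$-separated packing plus volume-comparison argument; the paper itself does not prove this lemma but cites it from \cite[Lemma 4.1]{N21}, whose proof is essentially the same. Nothing further is needed.
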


\subsection{Proofs of~Theorems~\ref{main result2 abelian variety for  over K},~\ref{main result2 for elliptic curve over K}}
\label{proof of main results for abelian and elliptic}
%We are now in a position to give a proof of Theorem~\ref{main result2 abelian variety for  over K}.
% and its proof is similar to that of~\cite[Theorem 2.1]{N21}.
	\begin{proof}[Proof of Theorem~\ref{main result2 abelian variety for  over K}]
%		By Mordell-Weil Theorem for abelian varieties over $K$, the $\Z$-module $A(K)/A_{\tors}(K)$ is free of rank $r$.
		%By taking tensor product with $\R$, we get $	A(K) \otimes \R \simeq \R^r$ which is a lattice of dimension $r$ and say it as $L_A$. 
%		Similar to \S~\ref{section to prove thm2 for E/K}, $A(K)/A_{\tors}(K)$ sits inside the lattice $ \R^r$ via the injection.
		By Theorem~\ref{BS lower bound on the canonical height on A(K)} there exists $c_1 \in \R^+$ such that $\hat{h}_K (P) \geq  c_1$ for all $P \in A(K) \setminus A_{\tors}(K)$.
		Now applying Lemma~\ref{covering lemma} with $n=r$, $R= \sqrt{\log B}$ and $\rho=\sqrt{c_1}>0 $, we get
		$$ \hat{N}_{A,K}(B) \leq t \left( 1+ 2\sqrt{(\log B)/c_1}  \right)^r \leq \left(\log B \right)^{c_2+{c_3}r} \underset{(*)}{\leq} B^{\frac{c_2+c_3r}
		{\log \log B}} \underset{\mrm{Lemma}~\ref{bound for the rank of abelian variety  over number field K} }{\leq} B^{\frac{C}
		{\log \log B}} $$ 
	   for all $B \geq e^e$, where $c_2,c_3 \in \R^+$ (depending on $A,K$), $C\in \R^+$ (depending on $A,K, p$), and $t :=\# A_\tors(K)$.  In the inequality $(*)$, we used $\log B \leq B^{\frac{1} {\log \log B}}$ for $B \geq e^e$. This completes the proof of the first part.
		
		To prove the second part, we proceed as follows. By~\cite[Theorem B.5.1]{HS00}, there exists a constant $d\in \R^+$ (depending on $A,K, \mcL$) such that
		$|\hat{h}_K(P)- h_K(P)| \leq d $ for all $P \in A(K)$.
		Since $ h_K(P)= \log H_K(P)$, we get
		$$\{P \in  A(K): H_K(P) \leq B\} \subseteq \{P \in  A(K): \hat{h}_K(P) \leq \log B+d\}.$$
		This gives $N_{A,K}(B) \leq \hat{N}_{A,K}(\log B \ +d) \leq \hat{N}_{A,K}(e^ {\log B+d})=\hat{N}_{A,K}(B e^{d}) \leq (B e^{d})^ { \left(\frac{C}{{\log \log B e^{d}}} \right)}$.
		Here, the last inequality follows from the first part. Since $B \geq e^e$, we have 
		 $$N_{A,K}(B) \leq B^\frac{(d+1)C}{{\log \log B e^{d}}}  \leq B^\frac{C^\prime}{{\log \log B }},$$ where $C^\prime=(d+1)C$. We are done with the proof.
\end{proof}

\begin{remark}
	The constants $C \in \R^+$ in Theorem~\ref{main result2 abelian variety for  over K} can be computed in terms of $A, K, p$, as follows. 
%		\begin{enumerate}
			\item $C= \# A_\tors(K) + (1+ \frac{2}{\sqrt{c_1}}) \times 2g \left( 2[L: \Q] + [L: K] \omega_1(\mfn)+  \varrho (\Cl_L[p]) \right)$, where 
			$c_1= {n \bigl( (6g)^8n \max\{1, h_F(A), \log n\} \bigr)^{-2g}}$. 
				Here, $n=[K: \Q]$, $L=K(A[p])$, $ \varrho (\Cl_L[p])$ denote the minimal number of generators of the finite abelian group $\Cl_L[p]$, $\omega_1(\mfn)$ denote the number of distinct prime ideals of $\mcO_K $ that occur in the prime factorization of $\mfn$, and $h_F(A)$ be the stable Falting height of $A$.	
%		\end{enumerate} 
\end{remark}

We now prove Theorem~\ref{main result2 for elliptic curve over K}. For this,
we need the following lemma, which computes the upper bound for $\mrm{rank}(E(K))$ in terms of $B$. 
%For any $m \in \Z \setminus \{0\}$, let $\omega(m)$ denote the number of  $p \in \mathbb{P}$ dividing $m$.

\begin{lem}
	\label{lem for the bound of rank of abelian variety}
	\label{lem for the bound of rank of E}
	Let $E, \mfn, p$ be as in Theorem~\ref{main result2 for elliptic curve over K}. Then there exists a constant  $c_2 \in \R^+$ (depending on $E, K, p$) such that
	\begin{equation*}
		\label{bound for the rank of A/K terms of B}
		\mrm{rank} (E(K)) \leq  c_2\frac{\log B}{\log \log B},
	\end{equation*}
	holds for all $B \in \R^+$ with $B \geq \max \{e^e, H_0(E)\}$. Moreover, the constant $c_2$ depends only on $K, p$, if $\#E(K)[p]=p^2$.
\end{lem}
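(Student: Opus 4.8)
The plan is to feed the effective rank bound of Lemma~\ref{bound for the rank of abelian variety over number field K} (taken with $g=1$) into a classical estimate for the number of distinct prime divisors of an integer, exploiting the hypothesis $B\ge H_0(E)$ to trade the discriminant of $E$ for $\log B$. Writing $L:=K(E[p])$, Lemma~\ref{bound for the rank of abelian variety over number field K} gives
$$\mrm{rank}(E(K))\le 2\bigl(2[L:\Q]+[L:K]\,\omega_1(\mfn)+\varrho(\Cl_L[p])\bigr),$$
so everything reduces to controlling the three summands. Two of them are harmless: the Galois action on $E[p]$ embeds $\Gal(L/K)$ into $\GL_2(\F_p)$, so $[L:K]\le\#\GL_2(\F_p)$ and $[L:\Q]\le[K:\Q]\,\#\GL_2(\F_p)$, both computable and bounded in terms of $K,p$, while $\varrho(\Cl_L[p])=\dim_{\F_p}\Cl_L[p]$ is a fixed computable non-negative integer depending on $L$, hence on $E,K,p$. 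The only term that can be large is $\omega_1(\mfn)$.

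To bound $\omega_1(\mfn)$, I would note that it equals the number of prime ideals of $\mcO_K$ at which $E$ has bad reduction, equivalently the number of prime ideals dividing the minimal discriminant ideal $\Delta_E\mcO_K$; each such prime lies above a rational prime dividing $N_\Delta:=|\mrm{N}_{K/\Q}(\Delta_E)|$, and at most $[K:\Q]$ primes lie above a given rational prime, whence $\omega_1(\mfn)\le [K:\Q]\,\omega(N_\Delta)$, $\omega(\cdot)$ denoting the number of distinct prime factors. Then I would invoke the classical estimate $\omega(n)\le C_0\,\log n/\log\log n$, valid for all integers $n\ge3$ with an explicit absolute constant $C_0$ (it follows from $n\ge\prod_{i\le\omega(n)}p_i$ together with the Chebyshev-type bounds of \cite{RS62}).

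Now I would use $N_\Delta=H_0(E)^{4[K:\Q]}$ and $B\ge\max\{e^e,H_0(E)\}$, so that $\log N_\Delta\le 4[K:\Q]\log B$. Since $t\mapsto \log t/\log\log t$ is non-decreasing and bounded below by $e$ on $[e^e,\infty)$, it suffices to establish $\omega_1(\mfn)\le c_4\,\log B/\log\log B$ at $B=\max\{e^e,H_0(E)\}$, and here I would split into two cases. If $H_0(E)\le e^e$ then $N_\Delta\le e^{4e[K:\Q]}$, so $\omega(N_\Delta)$ is at most a constant depending only on $K$, while $\log B/\log\log B=e$; if $H_0(E)>e^e$ then $\log\log H_0(E)=\log\log N_\Delta-\log(4[K:\Q])$ exceeds $1$, and comparing it with $\log\log N_\Delta$ shows $[K:\Q]\,\omega(N_\Delta)$ is bounded by (a constant depending only on $K$) times $\log H_0(E)/\log\log H_0(E)$. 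Either way $\omega_1(\mfn)\le c_4\,\log B/\log\log B$ for every admissible $B$, with $c_4$ depending only on $K$. Absorbing the two bounded summands via $\log B/\log\log B\ge 1$ then yields
$$\mrm{rank}(E(K))\le c_2\,\frac{\log B}{\log\log B},\qquad c_2:=2\bigl(2[L:\Q]+\varrho(\Cl_L[p])+[L:K]\,c_4\bigr),$$
which is computable from $E,K,p$.

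For the final clause, if $\#E(K)[p]=p^2$ then $E[p]\subseteq E(K)$, so $L=K$; then $[L:\Q]=[K:\Q]$, $[L:K]=1$ and $\varrho(\Cl_L[p])=\varrho(\Cl_K[p])$, all depending only on $K$ (resp. $K,p$), so $c_2$ depends only on $K,p$. The step I expect to be the real obstacle is the estimate for $\omega_1(\mfn)$: the improvement from the trivial $O(\log B)$ to $O(\log B/\log\log B)$ is precisely what the statement requires, so one genuinely needs an effective form of $\omega(n)=O(\log n/\log\log n)$ and some care with the nested logarithms, as $N_\Delta$ may be much smaller than $B$.
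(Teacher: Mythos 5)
Your proposal is correct and follows essentially the same route as the paper: reduce to bounding $\omega_1(\mfn)$ via $\omega(|\mrm{N}_{K/\Q}(\Delta_E)|)$, apply an effective $\omega(n)\ll \log n/\log\log n$ estimate (the paper cites Robin's Th\'eor\`eme 11 applied to $\Delta_E^2$ where you invoke the Rosser--Schoenfeld-based version), and use $B\ge H_0(E)$ to replace $\log|\mrm{N}_{K/\Q}(\Delta_E)|$ by $4[K:\Q]\log B$ before feeding everything into Lemma~\ref{bound for the rank of abelian variety over number field K}. Your case analysis comparing $\log\log N_\Delta$ with $\log\log B$ is actually more careful than the paper's one-line passage from $\tfrac{\log N_{\Delta^2}}{\log\log N_{\Delta^2}}$ to $c\tfrac{\log B}{\log\log B}$, but it is the same argument.
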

\begin{proof}
 Since $B \geq H_0(E):= |\mrm{N}_{K/ \Q}(\Delta_E)|^\frac{1}{4[K: \Q]}$, we get
	\begin{equation}
		\label{lower bound for B}
		%			\label{lower bound for B in terms of discriminant}
		\log |\mrm{N}_{K/ \Q}(\Delta_E)| \leq  4[K: \Q]\log B.	
	\end{equation}
	For any $m \in \Z \setminus \{0\}$, let $\omega(m)$ denote the number of distinct primes $p \in \mathbb{P}$ that divides $m$. Since $E/K$ has non-trivial conductor $\mfn$, we get $|\mrm{N}_{K/ \Q}(\Delta_E^2)| \geq 4$.
	By~\cite[Th$\acute{e}$or$\grave{e}$me 11]{R83},
    we get $\omega(|\mrm{N}_{K/ \Q}(\Delta_E)|)=\omega(|\mrm{N}_{K/ \Q}(\Delta_E^2)|) \leq  \frac{13841 \log|\mrm{N}_{K/ \Q}(\Delta_E^2)|}{\log\log |\mrm{N}_{K/ \Q}(\Delta_E^2)|}$. By~\eqref{lower bound for B},
	$\omega(|\mrm{N}_{K/ \Q}(\Delta_E)|) \leq c\frac{\log B}{\log \log B}$, where $c \in \R^+$ depends on $K$. Then
	 $$ \omega_1(\mfn) \leq \omega_1(\Delta_E) \leq [K : \Q] \times  \omega(|\mrm{N}_{K/ \Q}(\Delta_E)|) \leq c[K : \Q] \frac{\log B}{\log \log B}.$$
	Hence, the proof follows from Lemma~\ref{bound for the rank of abelian variety having a non-trivial $m$-torsion}.
\end{proof}

\begin{proof}[Proof of Theorem~\ref{main result2 for elliptic curve over K}]
  For any ideal $\mfa \subseteq \mcO_K$, let $\lambda(\mfa) :=  \log |\mrm{N}_{K/ \Q}(\mfa)|$.
	By~\cite[Theorem 2]{P06}, we get $\hat{h}_K (P) \geq \frac{\lambda(\Delta_E)}{c_3 \sigma_{E/K}^6 \log^2(c_4 \sigma_{E/K}^2)} $ for all $P \in E(K) \setminus E_{\tors}(K)$, where $c_3=10^{15}[K: \Q]^3$ and $c_4=104613[K: \Q]$. Since $\log^2(c_4 \sigma_{E/K}^2) \leq (c_4 \sigma_{E/K}^2)^2$, $\hat{h}_K (P) \geq \frac{\lambda(\Delta_E)}{c_5 \sigma_{E/K}^{10}}$, where $c_5 =c_3 c_4^2$. Since $E/ K$ has conductor $\mfn \subsetneq \mcO_K$, $\lambda(\Delta_E)>0$. By~\S\ref{covering argument of abelian variety over K},  $E(K)/E_{\tors}(K)$ sits inside $ \R^r$ as a lattice, where $r : =\mrm{rank}(E(K))$.
	
	Applying Lemma~\ref{covering lemma} with $n=r$, $R= \sqrt{\log B}$ and $\rho=\sqrt{\frac{\lambda(\Delta_E)}{c_5 \sigma_{E/K}^{10}}}>0  $, we get $ \hat{N}_{E,K}(B) \leq t \left(1+ 2\sqrt{\lambda(\Delta_E)} \frac{ \sqrt{c_5\log B}}{\lambda(\Delta_E)} \sigma_{E/K}^{5} \right)^r$, where
	$t := \# E_{\tors}(K)$. Since $\sigma_{E/K}=\frac{\lambda(\Delta_E)}{\lambda(\mfn)} \geq 1$ and by~\eqref{lower bound for B}, we have
%	$ \hat{N}_{E,K}(B) \leq t \left(  1+  4[K: \Q] \sqrt{c_5} \frac{\log(B)}{\lambda(\mfn)} \sigma_{E/K}^{4} \right)^r$. Since $\sigma_{E/K} \geq 1$, by~\eqref{lower bound for B} we get $\frac{4[K: \Q] \log B \sqrt{c_5}\sigma_{E/K}^{4}}{\lambda(\mfn) } \geq 1 $, hence
	\begin{equation}
		\label{bound for N_E inside the proof}
	\hat{N}_{E,K}(B) \leq t \left(  1+  4[K: \Q] \sqrt{c_5} \frac{\log(B)}{\lambda(\mfn)} \sigma_{E/K}^{4} \right)^r \leq t \left( c_6 \frac{ {\log B}}{\lambda(\mfn)} \sigma_{E/K}^{4}  \right)^r,
	\end{equation}
	where $c_6=8[K: \Q]\sqrt{c_5}$. Here the last inequality follows since $\frac{4[K: \Q] \log B \sqrt{c_5}\sigma_{E/K}^{4}}{\lambda(\mfn) } \geq 1 $. Now, we complete the proof of this theorem in two cases.
	
	We consider the first case $r <  \alpha_L$ with
	$\alpha_L:=30[L: \Q]+ 2 \varrho (\Cl_L[p])$, where $L = K(E[p])$.  Since the conductor
    $\mfn \subsetneq \mcO_K$, we get $\lambda(\mfn) \geq \log2$. By~\eqref{lower bound for B}, we have
		$\sigma_{E/K} \leq \frac{4[K: \Q] \log B}{\log 2}.$ Hence, by~\eqref{bound for N_E inside the proof}, we get
	$ \hat{N}_{E,K}(B) \leq c_7 \left(\log B  \right)^{c_8r}$ for some computable constants $c_7,\ c_8 \in \R^+$. Since $r <  \alpha_L$, we have
	$$ \hat{N}_{E,K}(B) \leq  \left( \log B  \right)^{C_1} \leq  \left( B  \right)^{\frac{C_1}{\log \log B}},$$
	for all $B \geq \mrm{max}\{e^e, H_0(E)\}$ and for a computable constant $C_1 \in \R^+$.
	
	We now consider the second case, i.e.,  when $\alpha_L \leq r$.
	By Lemma~\ref{lem for the bound of rank of E}, we have 
	\begin{equation}
		\label{range of r}
		\alpha_L \leq r \leq \alpha(B), 
		\end{equation} 
	where $\alpha(B) := c_2\frac{\log B}{\log \log B}.$	
	Since $\omega(\mrm{N}_{K/ \Q}(\mfn)) \geq \frac{\omega_1(\mfn)}{[K: \Q]}$, by Lemma~\ref{bound for the rank of E having a non-trivial $m$-torsion} we have
	\begin{equation}
		\label{bound for the number of prime divisor of norm of conductor}
		\omega(\mrm{N}_{K/ \Q}(\mfn))  \geq \frac{r-\beta_L}{2[L:\Q]} =:r_1,
	\end{equation}
    where $\beta_L=4[L: \Q] + 2 \varrho (\Cl_L[p])$. In particular, $|\mrm{N}_{K/ \Q}(\mfn)| \geq p(r_1)$, where $p(n)$ denote the product of the first $n$ rational primes. Since $r \geq \alpha_L$, we get $r_1 \geq 13.$ By~\cite[Lemma 4.5]{N21}, we have
    $p(r_1)\geq r_1^{r_1}$, hence $ \lambda(\mfn)= \log |\mrm{N}_{K/ \Q}(\mfn)| \geq r_1\log r_1$. By~\eqref{lower bound for B}, we get
		$\sigma_{E/K} \leq \frac{4[K: \Q] \log B}{r_1\log r_1}.$
    Using~\eqref{bound for N_E inside the proof}, we have $\hat{N}_{E, K}(B) \leq t \left(  \frac{ {c_9 \log B}}{ r_1\log r_1}  \right)^{5r}$ for a computable constant $c_9 \in \R^+$. Since $r=2[L: \Q]r_1+ \beta_L$, we get
	\begin{equation}
		\label{bound for N_E for large rank}
 \hat{N}_{E,K}(B) \leq t \left( \frac{c_9 {\log B}}{ r_1\log r_1}  \right)^{c_{10}r_1},
	\end{equation} 
	for a computable constant $c_{10}\in \R^+$. Since $r_1 \geq 13$ and $r_1 <r$, by ~\eqref{range of r} we get $13 \leq r_1 < \alpha(B)$. By~\eqref{bound for N_E for large rank}, it is enough to find the maximum of the function 
	$\phi (x)=  \left( \frac{ {\gamma}}{ x\log x}  \right)^{x}= e^{x(\log \gamma-\log(x\log x))}$ for $x\in [13, \alpha(B)],$ where $\gamma=c_9\log B$.  
	Then $$\phi' (x)= \phi (x) \times \left(\log\gamma-\log(x\log x)-1-\frac{1}{\log x} \right).$$
	Then, $\phi' (x) \geq 0$ for $x \leq x_0$ where $x_0 \log x_0= \gamma e^{-1 -\frac{1}{\log 13}}$,
	and $\phi' (x) \leq 0$ for $x \geq y_0$ where $y_0 \log y_0= \gamma e^{-1 -\frac{1}{\log \alpha(B)}}$. Therefore, the function $\phi (x)$ is increasing for $x \leq x_0$ and decreasing $x \geq y_0$.
	\begin{itemize}
	 \item For $13 \leq x \leq x_0$, $\phi$ has maximum at $x_0$ and
	$\phi(x) \leq e^{ \alpha(B)(1+ \frac{1}{\log 13}) }$.
	\item For $y_0\leq x \leq  \alpha(B)$, $\phi$ has maximum at $y_0$ and $\phi(x) \leq e^{ \alpha(B)(1+ \frac{1}{\log A}) } \leq e^{ \alpha(B)(1+ \frac{1}{\log 13}) }$. For $x_0 \leq x \leq y_0$,  
	$\phi(x) \leq e^{ \alpha(B)(1+ \frac{1}{\log 13}) }$.
	\end{itemize}
	Hence, $\phi(x) \leq e^{ (1+ \frac{1}{\log 13})  \alpha(B)} = B^{\frac{c_{11}}{\log \log B}}$ for all $x\in [13, \alpha(B)]$, where $c_{11}=c_2 (1+ \frac{1}{\log 13}).$ For any $t \geq 1, B \geq e^e$, we have $t \leq B^{\frac{t}{\log \log B}}$. Now, by~\eqref{bound for N_E for large rank}, we get
	\begin{equation}
		\label{N_E for the second part}
		\hat{N}_{E,K}(B) \leq  B^{\frac{C_2}{\log \log B}}
	\end{equation} 
	for a computable constant $C_2 \in \R^+$.  Finally, taking $C= \max\{ C_1, C_2 \}$, the first part of the theorem follows. From the proof, we see that the constants $C_1$ and $C_2$ are computable.

	Arguing as in the proof of Theorem~\ref{main result2 abelian variety for  over K}, the second part of the theorem follows from~\cite{Z76} and the estimate on $\hat{N}_{E,K}(B)$ from the first part.
%	To prove second part, we proceed as follows. By~\cite[Theorem XX]{Z76}, there exists constants $c_1$ and $c_2$ (depending on $E,K$) such that 
%	\begin{equation}
	%	\label{difference between heights}
	%	c_1 \leq \hat{h}_K(P)- h_K(P) \leq c_2
	%\end{equation} 
	%for any $P \in E(K)$.
    %the difference between the logarithmic and the canonical heights is bounded. So
 	%Hence $N_{E_K}(B) \leq \hat{N}_{E_K}(e^ {\log B+c_2})=\hat{N}_{E_K}(B e^{c_2}) \leq (B e^{c_2})^ { \left(\frac{D_K}{{\log \log B e^{c_2}}} \right)}$. Here, the first inequality follows from \eqref{difference between heights} and the last inequality follows from the first part. This gives $N_{E_K}(B) \leq B^\frac{(c_2+1)D_K}{{\log \log B e^{c_2}}}  \leq B^\frac{C_K}{{\log \log B }}$, where $C_K=(c_2+1)D_K$.
\end{proof}
% {\color{red}
% \begin{thm}[\cite{Z76}]
% 	Let $E/K$ be an elliptic curve as in \eqref{eqn for elliptic curve over K}. Then there exists absolutely computable constants $c_1$, $c_2$ such that
% 	$$c_1 \leq \hat{h}(P)-h(P) \leq c_2$$
% 	for all $P \in E(K)$.
% \end{thm}}

	\begin{remark}
		The constants $C$, $C^\prime \in \R^+$ in Theorem~\ref{main result2 for elliptic curve over K} can be computed explicitly, in terms of $E, K, p$, as follows. 
		\begin{enumerate}
			\item $C=\max\{C_1, C_2 \}$, where 
			\begin{enumerate}
				\item $C_1= \# E_\tors(K) + \left( 2^{11} \times 104613 \times (10 [K: \Q])^{\frac{15}{2}}
			(\log 2)^{-5}  +5 \right) \times  \Bigl( 30 [L: \Q] + 2 \varrho (\Cl_L[p]) \Bigr)$,
				
				\item 	$C_2= \# E_\tors(K) + c_{10} c_{11}$, where 
				     \begin{itemize}
				     	\item $c_{10}= 5  \Bigl( 6[L:\Q]+ 2\varrho (\Cl_L[p]) \Bigr)$,
				     	\item $c_{11}=2(1 +\frac{1}{\log 13}) \times  \Bigl(2[L:\Q] + 221456 [L:\Q] [K: \Q]+ \varrho (\Cl_L[p]) \Bigr) $. Here, $L=K(E[p])$ and $ \varrho (\Cl_L[p])$ denote the minimal number of generators of the finite abelian group $\Cl_L[p]$.
				     	
				     \end{itemize}
	 				
			\end{enumerate}

			\item $C^\prime= (d+1)C$, where $d=\frac{3\nu+7[K: \Q] \log 2}{2}$ with $\nu=-\sum_{v \in M_K} [K_v : \Q_v] \times \min \left\{ 0, \min\{ \frac{v(a)}{2}, \frac{v(b)}{3}\} \right\}$. Here $M_K$ denote the set of all absolute values on $K$.
		\end{enumerate} 
	\end{remark}

\subsection{Counting rational points on product of elliptic curves over $K$}
 Let $E_1,E_2, \ldots, E_g$ be elliptic curves over $K$, and let $A:= \prod_{i=1}^g E_i$ be the product of elliptic curves. Fix a Weierstrass embedding of each $E_i$ with $1 \leq i \leq g$, so that each $E_i$ can be considered as a subset of $\mathbb{P}^2$ through this embedding.
  Let $H_{A,K}$ be the multiplicative height associated with the resulting Segre embedding of $A$. Let $h_{A,K}$, $\hat{h}_{A,K}$ be the logarithmic,
  N\'eron-Tate heights associated with $H_{A,K}$.

  In this section, we extend Theorem~\ref{main result2 for elliptic curve over K} to the product of elliptic curves. To do this, we need to know the relation between the height of a point in $A(K)$ and the height of the corresponding points in $E_i(K)$.
\begin{lem}
	\label{relation between the logarthmic heights of A and E_i}
	\label{relation between the canonical heights of A and E_i}
	Let $E_1,E_2, \dots, E_g$ be elliptic curves over $K$, and $A:= \prod_{i=1}^g E_i$. Let $ P:=(P_1,P_2, \dots, P_g) \in A(K)$ with $P_i \in E_i(K)$ for all $i$.
	Then, we have
$H_{A, K} (P)= \prod_{i=1}^g H_K (P_i)$, $h_{A, K} (P) =	\sum_{i=1}^g h_K (P_i)$, and  $\hat{h}_{A, K} (P)= \sum_{i=1}^g \hat{h}_K (P_i),$
where $H_K (P_i),\ h_K (P_i),\  \hat{h}_K (P_i)$ are the multiplicative, logarithmic, N\'eron-Tate height of $P_i \in E_i(K)$ via the Weierstrass embedding of $E_i(K)$ in $\mathbb{P}^2(K)$.
\end{lem}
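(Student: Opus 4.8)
The plan is to prove the multiplicative identity $H_{A,K}(P)=\prod_{i=1}^g H_K(P_i)$ first, and then deduce the logarithmic and N\'eron-Tate statements by taking logarithms and passing to limits. First I would recall the construction of the height: each $E_i$ is embedded in $\mathbb{P}^2$ via its chosen Weierstrass model, so a point $P=(P_1,\dots,P_g)\in A(K)$ corresponds to a tuple of points in $\mathbb{P}^2(K)$, and $A$ is embedded in $\mathbb{P}^N$ (with $N=3^g-1$) via the Segre embedding $\mathbb{P}^2\times\cdots\times\mathbb{P}^2\hookrightarrow\mathbb{P}^N$. The key fact I would invoke is the standard multiplicativity of the multiplicative height under the Segre embedding: if $Q_i\in\mathbb{P}^{n_i}(K)$ and $s(Q_1,\dots,Q_g)$ denotes their image under the Segre map, then $H\bigl(s(Q_1,\dots,Q_g)\bigr)=\prod_{i=1}^g H(Q_i)$. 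This is because the coordinates of the Segre image are exactly the products of coordinates, so at each place $v$ the local max factors: $\max_{j_1,\dots,j_g}\prod_i |x_{i,j_i}|_v=\prod_i\max_{j_i}|x_{i,j_i}|_v$. Applying this with $Q_i=P_i\in E_i(K)\subseteq\mathbb{P}^2(K)$ gives $H_{A,K}(P)=\prod_{i=1}^g H_K(P_i)$ directly, where $H_K(P_i)$ is the height of $P_i$ in $\mathbb{P}^2$ via the Weierstrass embedding.

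Next, taking logarithms of the multiplicative identity immediately yields $h_{A,K}(P)=\sum_{i=1}^g h_K(P_i)$, since $h=\log H$ by definition of the logarithmic height. Finally, for the N\'eron-Tate heights I would use the limit definition $\hat{h}_{A,K}(P)=\lim_{n\to\infty} 4^{-n} h_{A,K}([2^n]P)$, and similarly on each factor. Since multiplication by $2$ on $A$ is the product of the multiplication-by-$2$ maps on the $E_i$, we have $[2^n]P=([2^n]P_1,\dots,[2^n]P_g)$, so by the logarithmic identity just proved, $4^{-n}h_{A,K}([2^n]P)=\sum_{i=1}^g 4^{-n}h_K([2^n]P_i)$; letting $n\to\infty$ gives $\hat{h}_{A,K}(P)=\sum_{i=1}^g \hat{h}_K(P_i)$. (Alternatively, one can note $|\hat h_{A,K}-h_{A,K}|$ and each $|\hat h_K-h_K|$ are bounded, and that both $\hat h_{A,K}$ and $\sum_i\hat h_K$ are quadratic forms agreeing with $h_{A,K}$ up to a bounded amount, hence equal.)

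I do not expect any serious obstacle here; the only point requiring a little care is bookkeeping of the various embeddings and the convention that $H_K(P_i)$ refers to the height in $\mathbb{P}^2$ under the fixed Weierstrass model (so that the Segre multiplicativity applies verbatim), which is exactly how the statement is phrased. The mildly delicate step, if one wants full rigor, is checking that the functorial/limit definition of $\hat h$ is compatible with products — but this is immediate from $[n]_A=\prod_i [n]_{E_i}$ and the additivity of $h_{A,K}$ over the factors established in the first two parts.
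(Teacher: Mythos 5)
Your proposal is correct and follows essentially the same route as the paper: multiplicativity of the height under the Segre embedding (the paper phrases the local-max factorization as an induction on $g$, you do it in one step), then logarithms, then the limit definition $\hat{h}_{A,K}(P)=\lim_{n\to\infty}4^{-n}h_{A,K}([2^n]P)$ combined with $[2^n]P=([2^n]P_1,\dots,[2^n]P_g)$. No gaps.
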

\begin{proof}
%  First, we will prove this lemma for $g=2$. Let $f: A(K)= E_1(K) \times E_2(K)  \rightarrow \mathbb{P}^8(K)$ be the Segre embedding defined by
%  $$	f\left( (x_0;x_1;x_2), (y_0;y_1;y_2)\right)=(x_iy_j; 0\leq i \leq 2, 0\leq j \leq 2).$$
%
% 	By the definition of $H_{K}$, we have $H_{K} (P)= H_K(f(P))$ for any $P \in A(K)$. Let $P =(P_1,P_2) \in A(K)$ for some $P_1 \in E_1(K)$ and $P_2 \in E_2(K)$. Let $P_1=(x_0;x_1;x_2) \in \mathbb{P}^2, P_2=(y_0;y_1;y_2) \in \mathbb{P}^2$ for  $x_i,y_j \in K$ with $0\leq i,j \leq 2$. Then $f(P)=(x_iy_j; 0\leq i \leq 2, 0\leq j \leq 2).$ By the definition of $H_{K}$, we have $H_{K} (P)= \prod_{v\in M_K} \max \{||x_iy_j||_v,  0\leq i,j \leq 2 \}$. Since $||.||_v$ is multiplicative for each $v \in M_K$, we have
% %	$$H_{K} (P)=\prod_{v\in M_K} \max \{||x_i||_v,  0\leq i\leq 2 \} \times \prod_{v\in M_K} \max \{||y_j||_v,  0\leq j \leq 2 \}.$$
% $H_{K} (P)=H_K (P_1) \times H_K (P_2)$, hence $h_{K} (P)=h_K (P_1)+h_K (P_2)$.

For any $g \geq 2$, let $\psi(g) :  A(K) : =  \prod_{i=1}^g E_i(K) \rightarrow \mathbb{P}^{3^g-1}(K)$ denote the Segre embedding. For any $P:=(P_1,P_2, \dots, P_g) \in A(K)$, let  $\psi(g-1)(P_1,P_2, \dots, P_{g-1})= (x_0, x_1, \dots x_{3^{(g-1)}-1})$ for some $x_i \in K$. Suppose $P_g=(y_0;y_1;y_2) \in \mathbb{P}^2$.  By definition, $H_{A,K} (P)= H_{A,K} (\psi(g)(P))= \prod_{v\in M_K} \max \{||x_iy_j||_v,  0\leq i \leq 3^{(g-1)}-1,\ 0\leq j \leq 2 \}$, where $M_K$ denote the set of all absolute values on $K$.
Since $||.||_v$ is multiplicative for each $v \in M_K$, we have
$$H_{A,K} (P)=H_{A^\prime,K} (\psi(g-1)(P_1,P_2, \dots, P_{g-1})) \times H_K (P_g)= H_{A^\prime,K} (P_1,P_2, \dots, P_{g-1}) \times H_K (P_g),$$
where $A^\prime :=\prod_{i=1}^{g-1} E_i$ and $H_{A^\prime,K}$ be the multiplicative height associated with the resulting Segre embedding of $A^\prime$. By induction
on $g$, we get $H_{A, K} (P)= \prod_{i=1}^g H_K (P_i)$.
By taking logarithms on both sides, we get  $h_{A,K} (P)= \sum_{i=1}^g h_K(P_i)$.
By the definition of $\hat{h}_{A, K}$, we have $\hat{h}_{A, K}(P)= \lim_{n \rightarrow \infty} \frac{h_{A, K}([2^n]P)}{4^n} =\lim_{n \rightarrow \infty} \frac{h_{A, K}([2^n]P_1, [2^n]P_2, \ldots, [2^n]P_g)}{4^n} \newline
= \lim_{n \rightarrow \infty} \frac{\sum_{i}^g h_{K}([2^n]P_i)}{4^n} = \sum_{i=1}^g \hat{h}_K (P_i).$ This completes the proof.
\end{proof}

By applying Theorem~\ref{main result2 for elliptic curve over K} to each $E_i$ and combining it with Lemma~\ref{relation between the logarthmic heights of A and E_i}, we get the following proposition.
\begin{prop}
		\label{main result1 for product of elliptic curves over K}
		Let $E_1,E_2, \dots E_g$ be elliptic curves over $K$ as in \eqref{eqn for elliptic curve over K} with conductor $\mfn_i\subsetneq \mcO_K$, and let $A:= \prod_{i=1}^g E_i$. Assume $E_i(K)[p] \neq 0$, for some $p \in \mathbb{P}$ and for all $i=1,\dots, g$. Then there exist \textbf{computable} constants $C$,\ $C^\prime$ (depending on $E_i\ (1 \leq i \leq g)$, $K,\ p,\ g$), such that
				\begin{equation*}
 				\hat{N}_{A,K}(B) \leq B^{ \left(\frac{C}{{\log \log B}} \right)},\ N_{A,K}(B) \leq B^{ \left(\frac{C^\prime}{{\log \log B}} \right)}
			\end{equation*}
		 for all $B \geq \max\{e^e, H_0(E_i) | 1 \leq i \leq g\}$.
	Moreover, the constants $C$, $C'$ depend only on $K,\ p,\ g$, if $\#E_i(K)[p]=p^2$ for all $i=1,\dots, g$.
\end{prop}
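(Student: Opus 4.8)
The plan is to reduce the count on $A = \prod_{i=1}^g E_i$ to the individual counts on the factors $E_i$, using the multiplicativity of heights established in Lemma~\ref{relation between the logarthmic heights of A and E_i}, and then to apply Theorem~\ref{main result2 for elliptic curve over K} to each $E_i$ separately.

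First I would note that for $P = (P_1, \dots, P_g) \in A(K)$ one has $H_{A,K}(P) = \prod_{i=1}^g H_K(P_i)$ and, since each relative multiplicative height satisfies $H_K(P_i) \geq 1$, the bound $H_{A,K}(P) \leq B$ forces $H_K(P_i) \leq B$ for every $i$. Hence
\[
\{P \in A(K) : H_{A,K}(P) \leq B\} \subseteq \prod_{i=1}^g \{P_i \in E_i(K) : H_K(P_i) \leq B\},
\]
so that $N_{A,K}(B) \leq \prod_{i=1}^g N_{E_i,K}(B)$. Running the identical argument with the N\'eron-Tate heights, now using $\hat{h}_{A,K}(P) = \sum_{i=1}^g \hat{h}_K(P_i)$ together with $\hat{h}_K(P_i) \geq 0$, gives $\hat{N}_{A,K}(B) \leq \prod_{i=1}^g \hat{N}_{E_i,K}(B)$.

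Next, for $B \geq \max\{e^e, H_0(E_i) \mid 1 \leq i \leq g\}$ the inequality $B \geq H_0(E_i)$ holds for each $i$, so Theorem~\ref{main result2 for elliptic curve over K} (applicable since $\mfn_i \subsetneq \mcO_K$ and $E_i(K)[p] \neq 0$ for all $i$) furnishes computable constants $C_i, C_i' \in \R^+$, depending on $E_i, K, p$, with $\hat{N}_{E_i,K}(B) \leq B^{C_i/\log\log B}$ and $N_{E_i,K}(B) \leq B^{C_i'/\log\log B}$. Taking the product over $i$ of these bounds yields
\[
\hat{N}_{A,K}(B) \leq B^{(\sum_{i=1}^g C_i)/\log\log B}, \qquad N_{A,K}(B) \leq B^{(\sum_{i=1}^g C_i')/\log\log B},
\]
so one takes $C := \sum_{i=1}^g C_i$ and $C' := \sum_{i=1}^g C_i'$, which are computable and depend on $E_1, \dots, E_g, K, p, g$. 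If moreover $\#E_i(K)[p] = p^2$ for all $i$, then by the final assertion of Theorem~\ref{main result2 for elliptic curve over K} each $C_i, C_i'$ depends only on $K, p$, whence $C, C'$ depend only on $K, p, g$.

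I do not expect a genuine obstacle here: the proof is a direct combination of Lemma~\ref{relation between the logarthmic heights of A and E_i} and Theorem~\ref{main result2 for elliptic curve over K}. The only mild points to verify are the elementary lower bounds $H_K(P_i) \geq 1$ and $\hat{h}_K(P_i) \geq 0$, which are exactly what allow a bound on the product (respectively the sum) to pass to each coordinate, and the bookkeeping that the stated range for $B$ is precisely what makes Theorem~\ref{main result2 for elliptic curve over K} simultaneously valid for all $g$ factors; the claimed dependence of the constants then follows by simply summing $g$ computable quantities.
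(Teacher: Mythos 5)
Your proof is correct and follows exactly the route the paper intends: the paper itself only states that the proposition follows "by applying Theorem~\ref{main result2 for elliptic curve over K} to each $E_i$ and combining it with Lemma~\ref{relation between the logarthmic heights of A and E_i}," and your argument supplies precisely the missing details (the coordinatewise reduction via $H_K(P_i)\geq 1$ and $\hat{h}_K(P_i)\geq 0$, and summing the exponents).
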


	Note that the constants $C$ $C^\prime$ in Theorem~\ref{main result2 abelian variety for  over K} are undetermined, whereas the constants in Proposition~\ref{main result1 for product of elliptic curves over K} are computable.

In the following proposition, we compute an explicit lower bound for the  N\'eron-Tate height on the product of elliptic curves in terms of $[K:\Q]$, discriminants, and Szpiro ratios $\sigma_{E_i/K}$ of $E_i/K$. In general, such a lower bound is not explicitly known for abelian varieties (cf. ~\cite[Theorem 0.1]{BS04}).

\begin{prop}
	\label{prop for lower bound for product of elliptic curves}
	Let $E_1, E_2, \dots, E_g$ be elliptic curves over $K$, and let $A:= \prod_{i=1}^g E_i$ be the product of $\{ E_i \}_{i=1}^g$. For any $ P \in A(K) \setminus A_\tors(K)$, we have
	$$\hat{h}_{A,K} (P) \geq \frac{1}{10^{15} [K:\Q]^3 C_{A,2}^6  \log^2(104613 [K:\Q]C_{A,2}^2)} C_{A,1}, $$
	where $C_{A,1}= \min\{\log |\mrm{N}_{K/ \Q}(\Delta_{E_i})|, i=1,\dots g\}$ and $C_{A,2}= \max\{\sigma_{E_1/K}, \dots \sigma_{E_g/K}\}$.
\end{prop}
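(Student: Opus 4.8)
To prove Proposition~\ref{prop for lower bound for product of elliptic curves}, the plan is to reduce the multi-variable statement to Petsche's one-variable lower bound \cite[Theorem 2]{P06} applied to a single well-chosen coordinate of $P$, and then to absorb the coordinate-dependent data into the uniform quantities $C_{A,1}$ and $C_{A,2}$ by a monotonicity argument.

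First I would write $P=(P_1,\dots,P_g)$ with $P_i\in E_i(K)$ and use the identification $A_{\tors}(K)=\prod_{i=1}^g E_{i,\tors}(K)$: since $P\notin A_{\tors}(K)$, at least one coordinate, say $P_j$, is non-torsion in $E_j(K)$. Lemma~\ref{relation between the canonical heights of A and E_i} then gives $\hat h_{A,K}(P)=\sum_{i=1}^g \hat h_K(P_i)\ge \hat h_K(P_j)$, because every summand $\hat h_K(P_i)$ is non-negative. This discards all information about the factors $i\ne j$, which is exactly what one can afford for a lower bound.

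Next I would invoke Petsche's inequality, in the form recalled in the proof of Theorem~\ref{main result2 for elliptic curve over K}: with $\lambda(\mfa):=\log|\mrm N_{K/\Q}(\mfa)|$,
$$\hat h_K(P_j)\ \ge\ \frac{\lambda(\Delta_{E_j})}{10^{15}[K:\Q]^3\,\sigma_{E_j/K}^6\,\log^2\!\bigl(104613[K:\Q]\,\sigma_{E_j/K}^2\bigr)}.$$
To pass from this to the claimed uniform bound I would (i) bound the numerator below by $C_{A,1}=\min_i\lambda(\Delta_{E_i})$, and (ii) bound the denominator above using $1\le \sigma_{E_j/K}\le C_{A,2}=\max_i\sigma_{E_i/K}$ together with the fact that $t\mapsto t^6\log^2\!\bigl(104613[K:\Q]\,t^2\bigr)$ is positive and increasing on $[1,\infty)$ (the argument of the logarithm is $\ge 104613[K:\Q]>1$ there). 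A preliminary remark would take care of degeneracies: if $C_{A,1}=0$ the asserted inequality is trivial since $\hat h_{A,K}\ge 0$, while if $C_{A,1}>0$ then every $E_i$ has a place of bad reduction, so each Szpiro ratio $\sigma_{E_i/K}=\lambda(\Delta_{E_i})/\lambda(\mfn_i)$ is defined and $\ge 1$, which is what (ii) requires.

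I do not expect any genuine obstacle here: the mathematical content is entirely contained in Lemma~\ref{relation between the canonical heights of A and E_i} and in \cite[Theorem 2]{P06}. The only points requiring care are the case split on whether $C_{A,1}$ vanishes, the verification that $t\mapsto t^6\log^2(c\,t^2)$ is increasing on $[1,\infty)$, and keeping track of inequality directions so that enlarging $\sigma_{E_j/K}$ to $C_{A,2}$ and shrinking $\lambda(\Delta_{E_j})$ to $C_{A,1}$ both weaken the bound in the correct direction.
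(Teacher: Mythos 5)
Your proposal is correct and follows essentially the same route as the paper: identify a non-torsion coordinate $P_j$, drop the other (non-negative) summands via $\hat h_{A,K}(P)=\sum_i\hat h_K(P_i)\ge \hat h_K(P_j)$, apply Petsche's bound to $E_j$, and then pass to $C_{A,1}$ and $C_{A,2}$. Your additional care with the monotonicity of $t\mapsto t^6\log^2(104613[K:\Q]\,t^2)$ on $[1,\infty)$ and with the degenerate case $C_{A,1}=0$ fills in details the paper leaves implicit, but does not change the argument.
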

\begin{proof} 
	Let $ P:=(P_1,P_2, \dots, P_g) \in A(K) \setminus A_\tors(K)$. By Lemma~\ref{relation between the canonical heights of A and E_i}, $P_i \in E_i(K) \setminus {(E_i)}_\tors(K)$ for some $i \in \{1, \dots g\}$.
Now, applying~\cite[Theorem 2]{P06} to $E_i$, we have 
$$\hat{h}_K (P_i) \geq \frac{1}{10^{15} [K:\Q]^3 \sigma_{E_i/K}^6 \log^2(104613 [K:\Q] \sigma_{E_i/K}^2)}  \log |\mrm{N}_{K/ \Q}(\Delta_{E_i})|.$$ Taking $C_{A,1}= \min\{\log |\mrm{N}_{K/ \Q}(\Delta_{E_i})|, i=1,2,\dots g\}$ and $C_{A,2}= \max\{\sigma_{E_1/K}, \dots \sigma_{E_g/K}\}$, the proof of the proposition follows from Lemma~\ref{relation between the canonical heights of A and E_i}.
\end{proof}

\section*{Acknowledgments}
The authors thank Prof. J.H. Silverman, Prof. D. W. Masser, Prof. A. Galateau for their suggestions and comments on an earlier draft. The authors also thank Prof. A. Plessis for the Remark~\ref{RemarkBogomolov} in the article. This research was supported in part by the International Centre for Theoretical Sciences (ICTS) for participating in the program - ICTS Rational Points on Modular Curves (code: ICTS/RPMC-2023/09).


\begin{thebibliography}{abcd9999}
		\bibitem[AM80]{AM80}
		Anderson, M.; Masser, David W. Lower bounds for heights on elliptic curves. Math. Z. 174 (1980), no. 1, 23--34.
		
		\bibitem[AM16]{AM16}
		Amoroso, Francesco; Masser, David.
		Lower bounds for the height in Galois extensions. Bull. Lond. Math. Soc. 48 (2016), no. 6, 1008--1012.
		
		\bibitem[BS04]{BS04}	
	     Baker, Matthew H.; Silverman, Joseph H.
	     A lower bound for the canonical height on abelian varieties over abelian extensions. Math. Res. Lett. 11 (2004), no. 2-3, 377--396.
	     
		\bibitem[BZ04]{BZ04}
		Bombieri, E.; Zannier, U. 
		On the number of rational points on certain elliptic curves. (Russian) ; translated from Izv. Ross. Akad. Nauk Ser. Mat. 68 (2004), no. 3, 5--14 Izv. Math. 68 (2004), no. 3, 437--445.
		
		
		
%		\bibitem[Dob79]{D79}
%		Dobrowolski, E. 
%		On a question of Lehmer and the number of irreducible factors of a polynomial. Acta Arith. 34 (1979), no. 4, 391--401.
%		
%		\bibitem[CPS06]{CPS06}
%		Cremona, J. E.; Prickett, M.; Siksek, Samir.
%		Height difference bounds for elliptic curves over number fields. J. Number Theory 116 (2006), no. 1, 42--68.
		
%		\bibitem[DG20]{DG20}
%		Daniels, Harris B.; González-Jiménez, Enrique.
%		On the torsion of rational elliptic curves over sextic fields. Math. Comp. 89 (2020), no. 321, 411--435.  
		
   
	    
%		\bibitem[DS17]{DS17}
%		Derickx, Maarten; Sutherland, Andrew V.
%		Torsion subgroups of elliptic curves over quintic and sextic number fields. Proc. Amer. Math. Soc. 145 (2017), no. 10, 4233--4245.
	
		
%		\bibitem[Gon17]{G17}
%		González-Jiménez, Enrique.
%		Complete classification of the torsion structures of rational elliptic curves over quintic number fields. J. Algebra 478 (2017), 484--505.
		
			\bibitem[DH00]{DH00}
		David, Sinnou; Hindry, Marc.
		Minoration de la hauteur de N\'eron-Tate sur les vari\'et\'es ab\'eliennes de type C. M. (French) [[Lower bound for the N\'eron-Tate height on abelian varieties of CM type]] J. Reine Angew. Math. 529 (2000), 1--74. 
		
		
			\bibitem[GM17]{GM17}
		Galateau, Aur\'elien; Mah\'e, Val\'ery.
		Some consequences of Masser's counting theorem on elliptic curves. Math. Z. 285 (2017), no. 1-2, 613--629.
		

	
		\bibitem[GR22]{GR22}
		Gaudron, \'Eric; R\'emond, Ga\"el.
		Nombre de petits points sur une vari\'et\'e ab\'elienne.
        \url{https://lmbp.uca.fr/~gaudron/art21.pdf}.
		
		\bibitem[HS88]{HS88}
		Hindry, M.; Silverman, J. H.
		The canonical height and integral points on elliptic curves. Invent. Math. 93 (1988), no. 2, 419--450.
		
		
		\bibitem[HS00]{HS00}
		Hindry, Marc; Silverman, Joseph H.
		Diophantine geometry. An introduction. Graduate Texts in Mathematics, 201. Springer-Verlag, New York, 2000. 
		
%		\bibitem[JCP06]{JCP06}
%		Jeon, Daeyeol; Kim, Chang Heon; Park, Euisung.
%		On the torsion of elliptic curves over quartic number fields. J. London Math. Soc. (2) 74 (2006), no. 1, 1--12.
		
		
 		
		\bibitem[Lan78]{L78}
		Lang, Serge.
		Elliptic curves: Diophantine analysis. Grundlehren der Mathematischen Wissenschaften, 231. Springer-Verlag, Berlin-New York, 1978. 
		
%		{\color{red}
%		\bibitem[Lau83]{L83}
%		Laurent M.
%		Minoration de la hauteur de N\'eron-Tate, [Seminaire de
%		Theorie de Nombres, Paris 1981-2], Boston-Basel-Stuttgart, Birkhauser, 1983,
%		pp. 137-152.}

%	   \bibitem[LS21]{LS21}
%	   Looper, Nicole R.; Silverman, Joseph H.
%	   A Lehmer-type height lower bound for abelian surfaces over function
%      fields. \url{arXiv:2108.09577v1}.
 
	
	    \bibitem[Mas84]{M84}
    	Masser, D. W.
    	Small values of the quadratic part of the N\'eron-Tate height on an abelian variety. Compositio Math. 53 (1984), no. 2, 153--170.
    	
    	
    	\bibitem[Mas86]{M86} Masser, D. W.: Letter to D. Bertrand. Nov. 17th (1986). \url{https://people.dm.unipi.it/lombardo/files/TorsionBounds/Masser - Letter to Daniel Bertrand.pdf}.
		
		\bibitem[Mas89]{M89}
	    Masser, D. W.
	    Counting points of small height on elliptic curves. Bull. Soc. Math. France 117 (1989), no. 2, 247--265. 
	    
%	    \bibitem[Mer96]{M96}
%		 Merel, Lo\"ic.
%		 Bornes pour la torsion des courbes elliptiques sur les corps de nombres. (French) [[Bounds for the torsion of elliptic curves over number fields]] Invent. Math. 124 (1996), no. 1-3, 437--449.
		
		\bibitem[Nac21]{N21}
		Naccarato, Francesco. 
		Counting rational points on elliptic curves with a rational 2-torsion point. Atti Accad. Naz. Lincei Rend. Lincei Mat. Appl. 32 (2021), no. 3, 499--509.
 	
		
		
		\bibitem[OT89]{OT89}
		Ooe, Takeshi; Top, Jaap.
		On the Mordell-Weil rank of an abelian variety over a number field. J. Pure Appl. Algebra 58 (1989), no. 3, 261--265.
		
		
		\bibitem[Pet06]{P06}
		Petsche, Clayton.
		Small rational points on elliptic curves over number fields. New York J. Math. 12 (2006), 257--268.
		
%			\bibitem[Rat08]{R08}
%		Ratazzi, Nicolas.
%		Intersection de courbes et de sous-groupes et problèmes de minoration de hauteur dans les variétés abéliennes C.M. (French) [[Intersection of curves and subgroups and lower bounds for the height in CM abelian varieties]] Ann. Inst. Fourier (Grenoble) 58 (2008), no. 5, 1575--1633.
		
		\bibitem[Rob83]{R83}
		Robin, Guy.
		Estimation de la fonction de Tchebychef $\theta$ sur le $k$-i\`eme nombre premier et grandes valeurs de la fonction $\omega (n)$ nombre de diviseurs premiers de $n$. (French) [[Estimate of the Chebyshev function $\theta $ on the $k$th prime number and large values of the number of prime divisors function $\omega (n)$ of $n$]] Acta Arith. 42 (1983), no. 4, 367--389.
		
			\bibitem[RS62]{RS62}
		Rosser, J. Barkley; Schoenfeld, Lowell.
		Approximate formulas for some functions of prime numbers. Illinois J. Math. 6 (1962), 64--94. 
		
		
		\bibitem[Sil81]{S81}
		Silverman, Joseph H.
    	Lower bound for the canonical height on elliptic curves. Duke Math. J. 48 (1981), no. 3, 633--648. 
		
%		\bibitem[Sil09]{S09}
%		Silverman, Joseph H. The arithmetic of elliptic curves. Second edition. Graduate Texts in Mathematics, 106. Springer, Dordrecht, 2009. 
%         \bibitem[Sil86]{S86}
%         Silverman, Joseph H. 
%         The arithmetic of elliptic curves. Graduate Texts in Mathematics, 106. Springer-Verlag, New York, 1986.
		
%		\bibitem[Sil94]{S94}
%		Silverman, Joseph H.
%		Advanced topics in the arithmetic of elliptic curves. Graduate Texts in Mathematics, 151. Springer-Verlag, New York, 1994.
		
%		\bibitem[ST15]{ST15}
%		Silverman, Joseph H.; Tate, John T. Rational points on elliptic curves. Second edition. Undergraduate Texts in Mathematics. Springer, Cham, 2015.
		
% 		\bibitem[Wei29]{W29}
% 		Weil, Andr\'e.
% 		L'arithm\'etique sur les courbes alg\'ebriques. (French) Acta Math. 52 (1929), no. 1, 281--315.
% 		
		
		\bibitem[Zim76]{Z76}
		Zimmer, Horst G\"unter. On the difference of the Weil height and the N\'eron-Tate height. Math. Z. 147 (1976), no. 1, 35--51.

	\end{thebibliography}
\end{document}